\newtheorem{theorem}{Theorem}[section]
\newtheorem{prop}[theorem]{Proposition}
\newtheorem{lemma}[theorem]{Lemma}
\newtheorem{coro}[theorem]{Corollary}
\newtheorem{prop-def}{Proposition-Definition}[section]
\theoremstyle{definition}
\newtheorem{defn}[theorem]{Definition}
\newtheorem{remark}[theorem]{Remark}
\newtheorem{exam}[theorem]{Example}
\def\<{\langle}
\def\>{\rangle}
\date{\today}
\begin{document}
\renewcommand{\baselinestretch}{1.2}
\renewcommand{\arraystretch}{1.0}
\title{\bf  Cohomologies of  modified $\lambda$-differential Lie triple systems and applications}
\author{{\bf Wen Teng$^{*}$,   Fengshan Long$^{}$, Yu Zhang$^{}$}\\
{\small  School of Mathematics and Statistics, Guizhou University of Finance and Economics} \\
{\small  Guiyang  550025, P. R. of China}\\
  {\small * Corresponding author: tengwen@mail.gufe.edu.cn (Wen Teng)} \\
}

 \maketitle
\begin{center}
\begin{minipage}{13.cm}

{\bf Abstract}
In this paper, we introduce the concept and representation of modified $\lambda$-differential Lie triple systems. Next, we define the cohomology of modified $\lambda$-differential Lie triple systems with coefficients in a suitable representation.  As   applications of the proposed cohomology theory,  we study 1-parameter formal deformations and abelian extensions of modified $\lambda$-differential Lie triple systems.
 \smallskip

{\bf Key words:} Lie triple system, modified $\lambda$-differential operator, cohomology, deformation, extension.
 \smallskip

 {\bf 2020 MSC:}17A30,
17A42,
17B10,
17B56
 \end{minipage}
 \end{center}
 \normalsize\vskip0.5cm

\section{Introduction}
\def\theequation{\arabic{section}. \arabic{equation}}
\setcounter{equation} {0}

Jacobson  \cite{Jacobson1,Jacobson2} introduced the concept of Lie triple systems by quantum mechanics and Jordan theory. In fact, Lie triple systems originated from E.Cartan's research on symmetric spaces and totally geodesic submanifolds. Since then, the structure theory,  representation theory, cohomology theory, deformation theory and extension theory of Lie triple systems   were established in \cite{Lister,Yamaguti,Hodge, Harris,Kubo,Lin,Zhang}.

Derivations are useful for the study of algebraic structure. Derivations also play an important role in the study of homotopy  algebras, deformation formulas, differential Galois theory, control theory and gauge theories of quantum field theory, see \cite{Voronov,Coll,Magid,Ayala1,Ayala2,Batalin}.
Recently, associative algebras with derivations \cite{Loday}, Lie algebras with derivations \cite{Tang}, Leibniz algebras with derivations \cite{Das1}, Leibniz triple systems with derivations \cite{Wu1} and Lie triple systems with derivations \cite{Wu2,Sun, Guo4} have been widely studied. All these results provide a good starting point for our further study.


In recent years, due to the outstanding work of  \cite{Bai,Guo0,Guo1,Guo2,Das2, Wang}, more and more scholars  begun to pay   attention to the  structure  with arbitrary weights.
 Rota-Baxter Lie algebras of any weight \cite{Das3},  Rota-Baxter 3-Lie algebras of any weight \cite{Hou,Guo3} and  Rota-Baxter Lie triple systems of any weight \cite{Chen} appear successively.
After that,
for $\lambda\in \mathbb{K}$,   the cohomology,  extension and deformation theory of  Lie algebras with differential operators of weight $\lambda$ are   introduced by Li and Wang \cite{Li1}. In addition,  the cohomology and deformation theory of modified Rota-Baxter associative algebras and modified Rota-Baxter Leibniz algebras of weight $\lambda$ are given in \cite{Das4,Li2,Mondal}.
  The concept of  modified   $\lambda$-differential Lie algebras  are  introduced in \cite{Peng}.
The method of this paper is to follow the recent work \cite{Wu2,Sun,Guo2, Guo4,Peng}.
Our main objective is to  consider  modified $\lambda$-differential Lie triple systems.
  More precisely,  we  introduce the concept of  a modified $\lambda$-differential Lie triple system, which includes a Lie triple system and a modified $\lambda$-differential operator.
We  define a cochain map $\Phi$, and then give the cohomology   of modified $\lambda$-differential Lie triple systems with coefficients in a representation by using a Yamaguti coboundary operator $\delta$ and a cochain map $\Phi$.  Next we  study 1-parameter  formal deformations  of a modified $\lambda$-differential Lie triple system using the third cohomology group of  modified $\lambda$-differential Lie triple system with the coefficient in the adjoint representation.
Finally, we study abelian extensions   of a modified $\lambda$-differential Lie triple system using the third cohomology group.   All the results in this paper can be regarded as  generalizations of Lie triple systems with derivations  \cite{Wu2,Sun, Guo4}.

The paper is organized as follows.   In Section  \ref{sec:Repre}, we  introduce the  concept of  a modified $\lambda$-differential Lie triple system, and give its representation.  In Section \ref{sec:cohomologydo},   we define a cohomology theory for modified $\lambda$-differential Lie triple systems with coefficients in a representation.
 In Section \ref{sec:def},  we  study 1-parameter  formal deformations  of a modified $\lambda$-differential Lie triple system.
 In Section \ref{sec:ext},  we study abelian extensions   of a modified $\lambda$-differential Lie triple system.

Throughout this paper, $\mathbb{K}$ denotes a field of characteristic zero. All the algebras,  vector spaces,
algebras, linear maps and tensor products are taken over $\mathbb{K}$.

\section{Representations of modified $\lambda$-differential Lie triple systems}\label{sec:Repre}
\def\theequation{\arabic{section}.\arabic{equation}}
\setcounter{equation} {0}

In this section, first, we  recall some basic concepts of    Lie triple systems from \cite{Jacobson1} and \cite{Yamaguti}.
Then, we introduce the  concept of a  modified $\lambda$-differential  Lie triple system    and its representation.

\begin{defn}  \cite{Jacobson1}
(i) A Lie triple system is a   pair $(\mathfrak{L}, [\cdot, \cdot, \cdot])$ in which $\mathfrak{L}$ is a vector space together with  a ternary operation $[\cdot, \cdot, \cdot]$
on $\mathfrak{L}$   such that
\begin{align}
&[x,y,z]+[y,x,z]=0,\label{2.1}\\
&[x,y,z]+[z,x,y]+[y,z,x]=0,\label{2.2}\\
 &[a, b, [x, y, z]]=[[a, b, x], y, z]+ [x,  [a, b, y], z]+ [x,  y, [a, b, z]],\label{2.3}
\end{align}
for all $ x, y, z, a, b\in \mathfrak{L}$.\\
(ii) A homomorphism between two  Lie triple systems  $(\mathfrak{L}_1, [\cdot, \cdot, \cdot]_1)$ and $(\mathfrak{L}_2, [\cdot, \cdot,\cdot]_2)$ is a linear map $\zeta: \mathfrak{L}_1\rightarrow \mathfrak{L}_2$ satisfying
$\zeta([x, y, z]_1)=[\zeta(x), \zeta(y),\zeta(z)]_2,~~\forall ~x,y,z\in \mathfrak{L}_1.$
\end{defn}

\begin{defn} \cite{Yamaguti}
A representation of  a  Lie triple system $(\mathfrak{L}, [\cdot, \cdot,\cdot])$ on  a vector space $\mathfrak{V}$ is a bilinear map $\theta: \mathfrak{L}\times \mathfrak{L}\rightarrow \mathrm{End}(\mathfrak{V})$, such that
\begin{align}
& \theta(a,b)\theta(x,y)-\theta(y,b)\theta(x,a)-\theta(x, [y,a,b])+D(y,a)\theta(x,b)=0,\label{2.4}\\
& \theta(a,b)D(x,y)-D(x,y)\theta(a,b)+\theta([x,y,a],b)+\theta(a,[x,y,b])=0,\label{2.5}\
\end{align}
for all  $x,y,a,b\in \mathfrak{L}$, where $D(x,y)=\theta(y,x)-\theta(x,y)$. In this case, we also call $\mathfrak{V}$ a $\mathfrak{L}$-module.
\end{defn}

\begin{exam}
Any Lie triple system $(\mathfrak{L}, [\cdot, \cdot,\cdot])$ is a representation over itself with
$$\mathcal{R}: \mathfrak{L}\times \mathfrak{L} \rightarrow \mathrm{End}(\mathfrak{L}), (a,b)\mapsto (x\mapsto [x,a,b]).$$
It is called the adjoint representation over the  Lie triple system.
\end{exam}

\begin{defn}
(i)  Let  $\lambda\in \mathbb{K}$ and  $(\mathfrak{L},  [\cdot, \cdot, \cdot])$ be a Lie triple system. A modified $\lambda$-differential operator (also called a modified differential operator of weight $\lambda$) on $\mathfrak{L}$   is a linear operator $d:\mathfrak{L}\rightarrow \mathfrak{L}$,  such that
\begin{align}
d([a,b,c])=&[d(a),b,c]+[a,d(b),c]+[a,b,d(c)]+\lambda[a,b,c]\label{2.6}.
\end{align}
(ii) A modified $\lambda$-differential  Lie triple system (also called
 a modified differential Lie triple system  of weight $\lambda$) is a triple  $(\mathfrak{L}, [\cdot, \cdot,\cdot], d)$  consisting of a
Lie triple system  $(\mathfrak{L}, [\cdot, \cdot,\cdot])$ and a modified $\lambda$-differential operator $d$.\\
(iii) A homomorphism between two modified $\lambda$-differential  Lie triple systems  $(\mathfrak{L}_1, [\cdot, \cdot, \cdot]_1,d_1)$ and $(\mathfrak{L}_2,   [\cdot,\cdot, \cdot]_2, d_2)$  is a Lie triple system  homomorphism $\zeta: (\mathfrak{L}_1,  [\cdot, \cdot, \cdot]_1)\rightarrow (\mathfrak{L}_2,  [\cdot,\cdot, \cdot]_2)$ such that
$\zeta\circ d_1=d_2\circ\zeta$.  Furthermore, if $\zeta$ is  nondegenerate,  then $\zeta$ is called an isomorphism from $\mathfrak{L}_1$ to $\mathfrak{L}_2$.
\end{defn}

\begin{remark}
Let $d$ be a modified $\lambda$-differential operator on $(\mathfrak{L},  [\cdot, \cdot, \cdot])$. If $\lambda=0,$ then $d$ is a derivation on $\mathfrak{L}$. We denote the set of all derivations on $\mathfrak{L}$  by $\mathrm{Der}(\mathfrak{L}).$  See \cite{Zhou} for   various derivations of Lie triple systems.
\end{remark}

Moreover, there is a close relationship between   derivations and modified $\lambda$-differential operators.

\begin{prop}
Let   $(\mathfrak{L}, [\cdot, \cdot, \cdot ])$ be a  Lie triple system.  Then, a linear operator $d:\mathfrak{L}\rightarrow \mathfrak{L}$ is a   modified $\lambda$-differential operator
if and only if $d+\frac{\lambda}{2} \mathrm{id}_\mathfrak{L}$ is a derivation on $\mathfrak{L}$.
\end{prop}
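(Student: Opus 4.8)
The plan is to translate the problem into a single identity and verify it by a direct expansion, exploiting the trilinearity of the bracket. Set $D := d + \frac{\lambda}{2}\,\mathrm{id}_{\mathfrak{L}}$, which is again a linear operator on $\mathfrak{L}$. By the definition of a derivation on a Lie triple system (the $\lambda = 0$ case of \eqref{2.6}, as noted in the Remark), $D$ is a derivation precisely when
\begin{align*}
D([a,b,c]) = [D(a),b,c] + [a,D(b),c] + [a,b,D(c)]
\end{align*}
holds for all $a,b,c \in \mathfrak{L}$. So the whole statement reduces to showing that this equation for $D$ is equivalent to \eqref{2.6} for $d$, and since both conditions are equalities that must hold for all triples, it suffices to show they are the \emph{same} equation after simplification; the ``if and only if'' then follows in one stroke.

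The key step is the expansion. On the left I would write $D([a,b,c]) = d([a,b,c]) + \frac{\lambda}{2}[a,b,c]$. On the right, linearity of $D$ in each argument together with trilinearity of $[\cdot,\cdot,\cdot]$ gives
\begin{align*}
[D(a),b,c] + [a,D(b),c] + [a,b,D(c)] = [d(a),b,c] + [a,d(b),c] + [a,b,d(c)] + \tfrac{3\lambda}{2}[a,b,c],
\end{align*}
where the factor $\tfrac{3\lambda}{2}$ arises because the $\frac{\lambda}{2}\,\mathrm{id}_{\mathfrak{L}}$ term contributes once in each of the three slots. Equating the two sides and cancelling the common $\frac{\lambda}{2}[a,b,c]$ on the left against the $\frac{3\lambda}{2}[a,b,c]$ on the right leaves exactly $d([a,b,c]) = [d(a),b,c] + [a,d(b),c] + [a,b,d(c)] + \lambda[a,b,c]$, which is \eqref{2.6}.

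There is essentially no obstacle here: the only point requiring attention is the arithmetic of the weights, namely that the coefficient $\frac{\lambda}{2}$ is precisely calibrated so that $3 \cdot \frac{\lambda}{2} - \frac{\lambda}{2} = \lambda$ reproduces the modification term in \eqref{2.6}. This is exactly why the half-weight shift is the correct one, and it reflects the fact that the bracket is a ternary (degree-three) operation rather than a binary one. Both implications of the equivalence are read off simultaneously from this single reduction, so the proof is complete once the computation above is displayed.
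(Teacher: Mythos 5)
Your proof is correct and follows essentially the same route as the paper: both rest on the observation that, after expanding $d+\frac{\lambda}{2}\mathrm{id}_\mathfrak{L}$ in each slot, the derivation identity for $d+\frac{\lambda}{2}\mathrm{id}_\mathfrak{L}$ and Eq.~\eqref{2.6} for $d$ are the same equation. The paper merely asserts this equivalence, whereas you display the weight bookkeeping $3\cdot\frac{\lambda}{2}-\frac{\lambda}{2}=\lambda$ explicitly, which is a welcome amplification but not a different argument.
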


\begin{proof}
Eq. \eqref{2.6}  is equivalent to
$$(d+\frac{\lambda}{2} \mathrm{id}_\mathfrak{L})([a,b,c])=[(d+\frac{\lambda}{2} \mathrm{id}_\mathfrak{L})(a),b,c]+[a,(d+\frac{\lambda}{2} \mathrm{id}_\mathfrak{L})(b),c]+[a,b,(d+\frac{\lambda}{2} \mathrm{id}_\mathfrak{L})(c)].$$
The proposition follows.
\end{proof}

\begin{exam}
Let $(\mathfrak{L},[\cdot,\cdot],d)$ be a modified $\lambda$-differential  Lie algebra  (see \cite{Peng}, Definition 2.5).  Define trilinear map $[\cdot,\cdot,\cdot]:\mathfrak{L}\times \mathfrak{L}\times \mathfrak{L}\rightarrow \mathfrak{L}$ by $[a,b,c]=[[a,b],c], \forall a,b,c\in \mathfrak{L}.$
Then $(\mathfrak{L},[\cdot,\cdot,\cdot],d)$ is a modified $(2\lambda)$-differential  Lie triple system.
\end{exam}

\begin{exam}
Let $(\mathfrak{L},[\cdot,\cdot, \cdot],d)$ be a modified $\lambda$-differential  Lie triple system.
Then, for $k\in\mathbb{K}$, $(\mathfrak{L},[\cdot,\cdot,\cdot],kd)$ is a modified $(k\lambda)$-differential  Lie triple system.
\end{exam}

\begin{exam}
Let $(\mathfrak{L},  [\cdot, \cdot, \cdot])$ be a 2-dimensional Lie triple system $\mathfrak{L}$ with the  basis $\mathfrak{u}_1$ and $\mathfrak{u}_2$ defined by
$$ [\mathfrak{u}_1,\mathfrak{u}_2,\mathfrak{u}_2]=\mathfrak{u}_1. $$
Then the operator $d=\left(
        \begin{array}{cc}
          k & k_1 \\
          0 & k_2 \\
        \end{array}
      \right)$
is a modified $(-2k_2)$-differential operator  on $\mathfrak{L}$, for $k,k_1,k_2\in \mathbb{K}$.
\end{exam}
\begin{exam}
Let $(\mathfrak{L},  [\cdot, \cdot, \cdot])$ be a 4-dimensional  Lie triple system  with a basis $\mathfrak{u}_1$, $\mathfrak{u}_2, \mathfrak{u}_3$ and $\mathfrak{u}_4$ defined by $ [\mathfrak{u}_1,\mathfrak{u}_2,\mathfrak{u}_1]=\mathfrak{u}_4$.
Then,  the operator
$$d=\left(
        \begin{array}{cccc}
          1 & 0 & k_1 & 0 \\
          0 & 1  & k_2 & 0 \\
          0 & 0  & k_3& 0 \\
         0 & 0  & k_4 & k \\
        \end{array}
      \right)$$
is a modified  $(k-3)$-differential  operator  on $\mathfrak{L}$, for $k,k_i\in \mathbb{K},(i=1,2,3,4)$.
\end{exam}

\begin{defn}
  A representation of the modified $\lambda$-differential Lie triple system   $(\mathfrak{L}, [\cdot, \cdot,\cdot],d)$   is a triple $(\mathfrak{V};  \theta, d_\mathfrak{V})$, where $(\mathfrak{V};  \theta)$ is a representation of the Lie triple system  $(\mathfrak{L}, [\cdot, \cdot,\cdot])$  and  $d_\mathfrak{V}$  is a linear operator on $\mathfrak{V}$, satisfying the following equation
\begin{align}
d_\mathfrak{V}(\theta(x,y)v)=\theta(d(x),y)v+\theta(x,d(y))v+\theta(x,y)d_\mathfrak{V}(v)+\lambda\theta(x,y)v, \label{2.7}
\end{align}
for any $x,y\in \mathfrak{L}$ and $ v\in \mathfrak{V}.$
\end{defn}

From  Eq.  \eqref{2.7}, we   get
 \begin{align}
d_\mathfrak{V}(D(x,y)v)=D(d(x),y)v+D(x,d(y))v+D(x,y)d_\mathfrak{V}(v)+\lambda D(x,y)v, \label{2.8}
\end{align}

Obviously, $(\mathfrak{L};    \mathcal{R}, d)$ is a  representation of the modified $\lambda$-differential Lie triple system  $(\mathfrak{L}, [\cdot, \cdot,\cdot],   d)$.

\begin{remark}
Let $(\mathfrak{V};  \theta, d_\mathfrak{V})$ be a representation of the modified $\lambda$-differential Lie triple system $(\mathfrak{L}, [\cdot, \cdot, \cdot ],d)$. If $\lambda=0,$ then $(\mathfrak{V};  \theta, d_\mathfrak{V})$ is a representation of the  Lie triple system with a derivation $(\mathfrak{L}, [\cdot, \cdot, \cdot ],d)$.
One can refer to \cite{Guo4,Sun,Wu2} for more information about Lie triple systems with  derivations.
\end{remark}

Moreover,
the following result finds the relation between representations over modified $\lambda$-differential Lie triple systems and over Lie triple systems with  derivations.

\begin{prop}
Let $(\mathfrak{V};  \theta)$ be a representation of the  Lie triple system $(\mathfrak{L}, [\cdot, \cdot, \cdot ])$.  Then $(\mathfrak{V};  \theta, d_\mathfrak{V})$ is a representation of the modified $\lambda$-differential Lie triple system $(\mathfrak{L}, [\cdot, \cdot, \cdot ],d)$ if and only if $(\mathfrak{V};  \theta, d_\mathfrak{V}+\frac{\lambda}{2} \mathrm{id}_\mathfrak{V})$ is a representation of the  Lie triple system with a derivation $(\mathfrak{L}, [\cdot, \cdot, \cdot ],d+\frac{\lambda}{2} \mathrm{id}_\mathfrak{L})$.
\end{prop}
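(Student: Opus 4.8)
The plan is to reduce the statement to the defining compatibility condition \eqref{2.7}, exploiting the fact that the bilinear map $\theta$ and the ternary bracket are untouched in both formulations. First I would observe that whether we view $(\mathfrak{V};\theta)$ as a representation of $(\mathfrak{L},[\cdot,\cdot,\cdot])$ or as a representation of the same Lie triple system equipped with the derivation $d+\tfrac{\lambda}{2}\mathrm{id}_\mathfrak{L}$, the underlying ternary operation is the same; hence the Lie triple system representation axioms \eqref{2.4}--\eqref{2.5} are literally identical in the two settings and require no verification. Consequently the entire content of the proposition is the equivalence of the two operator-compatibility equations, and the proof is a direct substitution.

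Next I would record the compatibility condition that $(\mathfrak{V};\theta,d_\mathfrak{V}+\tfrac{\lambda}{2}\mathrm{id}_\mathfrak{V})$ must satisfy in order to be a representation of the Lie triple system with derivation $(\mathfrak{L},[\cdot,\cdot,\cdot],d+\tfrac{\lambda}{2}\mathrm{id}_\mathfrak{L})$. This is \eqref{2.7} specialized to weight zero, with $d$ replaced by $d+\tfrac{\lambda}{2}\mathrm{id}_\mathfrak{L}$ and $d_\mathfrak{V}$ replaced by $d_\mathfrak{V}+\tfrac{\lambda}{2}\mathrm{id}_\mathfrak{V}$, namely
\begin{align*}
\Big(d_\mathfrak{V}+\tfrac{\lambda}{2}\mathrm{id}_\mathfrak{V}\Big)\big(\theta(x,y)v\big)
={}&\theta\big((d+\tfrac{\lambda}{2}\mathrm{id}_\mathfrak{L})(x),y\big)v
+\theta\big(x,(d+\tfrac{\lambda}{2}\mathrm{id}_\mathfrak{L})(y)\big)v\\
&+\theta(x,y)\big(d_\mathfrak{V}+\tfrac{\lambda}{2}\mathrm{id}_\mathfrak{V}\big)(v).
\end{align*}
I would then expand both sides using the linearity of $d_\mathfrak{V}$ and the bilinearity of $\theta$. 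The left-hand side produces $d_\mathfrak{V}(\theta(x,y)v)+\tfrac{\lambda}{2}\theta(x,y)v$, while each of the three Leibniz-type terms on the right produces one copy of $\tfrac{\lambda}{2}\theta(x,y)v$. Cancelling a single copy of $\tfrac{\lambda}{2}\theta(x,y)v$ from both sides then leaves exactly \eqref{2.7}; reading the computation in the reverse direction recovers the derivation condition, which establishes both implications simultaneously.

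The only obstacle, such as it is, is the bookkeeping of the half-weight terms: one must check that the three $\tfrac{\lambda}{2}\theta(x,y)v$ contributions on the right, minus the single one arising on the left, combine to precisely $\lambda\,\theta(x,y)v$, i.e. $\tfrac{3\lambda}{2}-\tfrac{\lambda}{2}=\lambda$. This is the representation-level analogue of the preceding Proposition relating modified $\lambda$-differential operators and derivations, and beyond this coefficient count no genuine difficulty arises.
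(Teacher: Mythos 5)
Your proposal is correct and matches the paper's own proof: the paper likewise observes that Eq.~\eqref{2.7} is equivalent, after expanding, to the derivation compatibility condition for $d_\mathfrak{V}+\tfrac{\lambda}{2}\mathrm{id}_\mathfrak{V}$ and $d+\tfrac{\lambda}{2}\mathrm{id}_\mathfrak{L}$, with your coefficient count $\tfrac{3\lambda}{2}-\tfrac{\lambda}{2}=\lambda$ being exactly the bookkeeping the paper leaves implicit. Your explicit remark that the axioms \eqref{2.4}--\eqref{2.5} need no re-verification is a harmless elaboration of the same argument.
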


\begin{proof}
Eq. \eqref{2.7}  is equivalent to
$$(d_\mathfrak{V}+\frac{\lambda}{2} \mathrm{id}_\mathfrak{V})(\theta(x,y)v)=\theta((d+\frac{\lambda}{2} \mathrm{id}_\mathfrak{L})(x),y)v+\theta(x,(d+\frac{\lambda}{2} \mathrm{id}_\mathfrak{L}))(y))v+\theta(x,y)(d_\mathfrak{V}+\frac{\lambda}{2} \mathrm{id}_\mathfrak{V})(v).$$
The proposition follows.
\end{proof}

\begin{exam}
Let $(\mathfrak{V};  \theta)$ be a representation of  the  Lie triple system $(\mathfrak{L}, [\cdot, \cdot, \cdot ])$.
Then, for $k\in \mathbb{K}$, $(\mathfrak{V}; \theta, \mathrm{id}_\mathfrak{V})$ is a representation of the modified $(-2k)$-differential Lie triple system $(\mathfrak{L}, [\cdot, \cdot, \cdot ],k\mathrm{id}_\mathfrak{L})$.
\end{exam}

\begin{exam}
Let $(\mathfrak{V};  \theta, d_\mathfrak{V})$ be a representation of the modified $\lambda$-differential Lie triple system $(\mathfrak{L}, [\cdot, \cdot, \cdot ],d)$.
Then, for $k\in \mathbb{K}$, $(\mathfrak{V};  \theta, k d_\mathfrak{V})$ is a representation of the modified $(k\lambda)$-differential Lie triple system $(\mathfrak{L}, [\cdot, \cdot, \cdot ],k d)$.
\end{exam}

Next we construct the semidirect product in the context of modified $\lambda$-differential Lie triple systems.

\begin{prop}
Let   $(\mathfrak{L}, [\cdot, \cdot, \cdot ],d)$ be a modified $\lambda$-differential Lie triple system  and   $(\mathfrak{V};  \theta, d_\mathfrak{V})$ be a representation of  it.  Then $\mathfrak{L }\oplus \mathfrak{V}$ is a  modified $\lambda$-differential Lie triple system under the following maps:
\begin{align*}
[x+u, y+v, z+w]_{\ltimes}:=&[x, y, z]+D(x, y)(w)- \theta(x, z)(v)+ \theta(y, z)(u),\\
d \oplus d_\mathfrak{V}(x+u):=&d(x)+d_\mathfrak{V}(u),
\end{align*}
for all  $x, y, z\in \mathfrak{L}$ and $u, v, w\in \mathfrak{V}$.
\end{prop}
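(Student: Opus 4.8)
The plan is to verify the two defining conditions in turn: that $(\mathfrak{L}\oplus\mathfrak{V}, [\cdot,\cdot,\cdot]_\ltimes)$ is a Lie triple system, and that $d\oplus d_\mathfrak{V}$ is a modified $\lambda$-differential operator on it.

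First I would check the Lie triple system axioms \eqref{2.1}, \eqref{2.2}, \eqref{2.3} directly on general elements $x+u,\,y+v,\,z+w$ of $\mathfrak{L}\oplus\mathfrak{V}$. The skew-symmetry \eqref{2.1} is immediate: the $\mathfrak{L}$-component vanishes by \eqref{2.1} for $[\cdot,\cdot,\cdot]$, the two $D$-terms cancel because $D(y,x)=-D(x,y)$ (from $D(x,y)=\theta(y,x)-\theta(x,y)$), and the remaining $\theta$-contributions swap and cancel. The cyclic identity \eqref{2.2} is a similar bookkeeping of the $\theta$- and $D$-terms, again using the definition of $D$ together with \eqref{2.2} for $\mathfrak{L}$. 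The fundamental identity \eqref{2.3} is the essential point: after expanding both sides, its $\mathfrak{L}$-component is exactly \eqref{2.3} for $\mathfrak{L}$, while its $\mathfrak{V}$-component breaks into groups of $\theta$- and $D$-terms that are governed precisely by the representation axioms \eqref{2.4} and \eqref{2.5}. This is the classical semidirect product of a Lie triple system with a module, and \eqref{2.4}--\eqref{2.5} are exactly the relations making it close.

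For the operator condition, I would apply $d\oplus d_\mathfrak{V}$ to $[x+u,\,y+v,\,z+w]_\ltimes$ and compare with the right-hand side of \eqref{2.6}, projecting onto $\mathfrak{L}$ and $\mathfrak{V}$ separately. The $\mathfrak{L}$-projection reduces to \eqref{2.6} for $d$. For the $\mathfrak{V}$-projection, the left-hand side is $d_\mathfrak{V}$ applied to $D(x,y)(w)-\theta(x,z)(v)+\theta(y,z)(u)$; invoking \eqref{2.8} on the $D$-term and \eqref{2.7} on each $\theta$-term yields exactly the terms read off from the $\mathfrak{V}$-components of the right-hand side, matching term by term, including the three $\lambda$-weighted summands. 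Hence \eqref{2.6} holds on $\mathfrak{L}\oplus\mathfrak{V}$.

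The main obstacle will be the verification of the fundamental identity \eqref{2.3} on the semidirect product, since it requires organizing all the $\theta$- and $D$-terms in the $\mathfrak{V}$-component and applying both \eqref{2.4} and \eqref{2.5} in the right places. Once the module-level identities \eqref{2.7} and \eqref{2.8} are in hand, the differential part is a routine linear comparison with no further subtlety.
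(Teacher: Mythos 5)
Your proposal is correct and follows essentially the same route as the paper: the paper simply cites the Lie triple system semidirect product as known (exactly the classical fact you invoke, with the $\mathfrak{V}$-component of \eqref{2.3} closing by \eqref{2.4}--\eqref{2.5}), and then verifies the operator condition \eqref{2.6} by expanding $d\oplus d_\mathfrak{V}$ on the bracket and applying \eqref{2.6}, \eqref{2.7}, \eqref{2.8} componentwise, just as you describe. The only difference is that you propose to check the Lie triple system axioms explicitly rather than quote them, which is a matter of thoroughness, not of method.
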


\begin{proof}
First, as we all know, $(\mathfrak{L} \oplus \mathfrak{V},[\cdot, \cdot, \cdot]_{\ltimes})$ is a Lie triple system. Next,
for any $x,y,z\in \mathfrak{L}, u,v,w\in \mathfrak{V}$, by Eqs. \eqref{2.6}, \eqref{2.7} and \eqref{2.8}, we have
\begin{align*}
&d \oplus d_\mathfrak{V}([x+u,y+v,z+w]_{\ltimes})\\
=&d ([x, y, z])+  d_\mathfrak{V}( D(x, y)(w)- \theta(x, z)(v)+ \theta(y, z)(u))\\
=&[d(x),y,z]+[x,d(y),z]+[x,y,d(z)]+\lambda[x,y,z]+ D(d(x),y)w+D(x,d(y))w+D(x,y)d_\mathfrak{V}(w)\\
&+\lambda D(x,y)w-\theta(d(x),z)v-\theta(x,d(z))v-\theta(x,z)d_\mathfrak{V}(v)-\lambda\theta(x,z)v+ \theta(d(y),z)u+\theta(y,d(z))u\\
&+\theta(y,z)d_\mathfrak{V}(u)+\lambda\theta(y,z)u\\
=&[d(x),y,z]+ D(d(x),y)w-\theta(d(x),z)v+\theta(y,z)d_\mathfrak{V}(u)\\
&+[x,d(y),z]+D(x,d(y))w-\theta(x,z)d_\mathfrak{V}(v)+ \theta(d(y),z)u\\
&+[x,y,d(z)]+D(x,y)d_\mathfrak{V}(w)-\theta(x,d(z))v+\theta(y,d(z))u\\
&+\lambda([x,y,z]+ D(x,y)w-\theta(x,z)v+\theta(y,z)u)\\
=&[d \oplus d_\mathfrak{V}(x+u),y+v,z+w]_{\ltimes}+[x+u,d \oplus d_\mathfrak{V}(y+v),z+w]_{\ltimes}+[x+u,y+v,d \oplus d_\mathfrak{V}(z+w)]_{\ltimes}\\
&+\lambda[x+u,y+v,z+w]_{\ltimes}.
\end{align*}
Therefore, $(\mathfrak{L} \oplus \mathfrak{V},[\cdot, \cdot, \cdot]_{\ltimes},d \oplus d_\mathfrak{V})$ is a modified $\lambda$-differential Lie triple system.
\end{proof}

Let $(\mathfrak{V};  \theta, d_\mathfrak{V})$  be a representation of a modified $\lambda$-differential Lie triple system $(\mathfrak{L}, [\cdot, \cdot, \cdot ],d)$, and $\mathfrak{V}^*$ be a dual space of $\mathfrak{V}$. We define a bilinear map $\theta^*: \mathfrak{L}\times \mathfrak{L}\rightarrow \mathrm{End}(\mathfrak{V}^*)$ and a linear map $d_\mathfrak{V}^*: \mathfrak{V}^*\rightarrow \mathfrak{V}^*$, respectively by
\begin{align}
&&\langle \theta^*(a,b)u^*,v \rangle=-\langle u^*,\theta(a,b)v \rangle, ~\mathrm{and}~\langle d_\mathfrak{V}^*u^*,v \rangle=\langle u^*,d_\mathfrak{V}(v) \rangle, \label{2.9}
\end{align}
for any $a,b\in \mathfrak{L}, v\in \mathfrak{V}$ and $u^*\in \mathfrak{V}^*.$

Give the switching operator $\tau:\mathfrak{L}\otimes \mathfrak{L}\rightarrow \mathfrak{L}\otimes \mathfrak{L}$ by $\tau(a\otimes b)=\tau(b\otimes a)$, for any $a,b\in \mathfrak{L}.$

\begin{prop}
  With the above notations, $(\mathfrak{V}^*; -\theta^*\tau,-d^*_\mathfrak{V})$ is a representation of modified $\lambda$-differential Lie triple system $(\mathfrak{L}, [\cdot, \cdot, \cdot ],d)$.
  We call it the dual representation of $(\mathfrak{V};  \theta, d_\mathfrak{V})$
\end{prop}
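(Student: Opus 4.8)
The plan is to reduce every identity to a statement about the natural pairing $\langle\cdot,\cdot\rangle$ between $\mathfrak{V}^*$ and $\mathfrak{V}$, so that all verifications follow from the defining relations \eqref{2.9} together with the axioms already known for $(\mathfrak{V};\theta,d_\mathfrak{V})$. Throughout I write the new action as $\eta:=-\theta^*\tau$, so that $\eta(a,b)=-\theta^*(b,a)$, and abbreviate the new operator as $d_{\mathfrak{V}^*}:=-d^*_\mathfrak{V}$. There are two things to check: that $(\mathfrak{V}^*;\eta)$ is a representation of the Lie triple system $(\mathfrak{L},[\cdot,\cdot,\cdot])$, i.e.\ that $\eta$ satisfies \eqref{2.4} and \eqref{2.5}; and that the pair $(\eta,d_{\mathfrak{V}^*})$ satisfies the compatibility \eqref{2.7}.

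For the first part I would dualize \eqref{2.4} and \eqref{2.5}. Writing $\bar D(x,y):=\eta(y,x)-\eta(x,y)$ for the operator attached to $\eta$, a short pairing computation using \eqref{2.9} gives $\langle\bar D(x,y)u^*,v\rangle=-\langle u^*,D(x,y)v\rangle$, so that $\bar D(x,y)$ is, up to sign, the transpose of $D(x,y)$. Pairing the desired identities \eqref{2.4} and \eqref{2.5} for $\eta$ against an arbitrary $v\in\mathfrak{V}$ and pushing every operator across the pairing then turns them into the original identities \eqref{2.4} and \eqref{2.5} for $\theta$, read with the arguments permuted by $\tau$. This is the standard construction of the dual (contragredient) representation of a Lie triple system, and I expect it to be essentially bookkeeping.

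The substantive point is the differential compatibility \eqref{2.7}. I would fix $x,y\in\mathfrak{L}$, $u^*\in\mathfrak{V}^*$, $v\in\mathfrak{V}$ and evaluate both sides of
$$d_{\mathfrak{V}^*}(\eta(x,y)u^*)=\eta(d(x),y)u^*+\eta(x,d(y))u^*+\eta(x,y)d_{\mathfrak{V}^*}(u^*)+\lambda\,\eta(x,y)u^*$$
against $v$. Using $\eta(a,b)=-\theta^*(b,a)$, $d_{\mathfrak{V}^*}=-d^*_\mathfrak{V}$ and the two relations in \eqref{2.9}, the left-hand side collapses to $-\langle u^*,\theta(y,x)d_\mathfrak{V}(v)\rangle$, while the right-hand side becomes
$$\langle u^*,\theta(y,d(x))v\rangle+\langle u^*,\theta(d(y),x)v\rangle-\langle u^*,d_\mathfrak{V}(\theta(y,x)v)\rangle+\lambda\langle u^*,\theta(y,x)v\rangle.$$
Now I apply \eqref{2.7} for the original representation with the arguments $(y,x)$ in place of $(x,y)$ to expand $d_\mathfrak{V}(\theta(y,x)v)$. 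The two $\theta(y,d(x))$-terms cancel, the two $\theta(d(y),x)$-terms cancel, and the two $\lambda$-terms cancel, leaving exactly $-\langle u^*,\theta(y,x)d_\mathfrak{V}(v)\rangle$, which matches the left-hand side. Since $v$ is arbitrary, \eqref{2.7} holds for $(\eta,d_{\mathfrak{V}^*})$.

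The only real obstacle is sign discipline: there are three independent sources of signs, namely the minus built into the definition of $\theta^*$ in \eqref{2.9}, the explicit minus signs in $\eta=-\theta^*\tau$ and $d_{\mathfrak{V}^*}=-d^*_\mathfrak{V}$, and the transposition of arguments produced by $\tau$. Once one commits to evaluating everything against a test vector $v$ and invokes the original axiom \eqref{2.7} with swapped arguments, all of these signs together with the $\lambda$-terms cancel in pairs, so no genuine difficulty beyond careful bookkeeping arises.
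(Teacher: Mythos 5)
Your verification of the differential compatibility \eqref{2.7} for $(-\theta^*\tau,-d^*_\mathfrak{V})$ is correct and is essentially the paper's own computation: the paper likewise pairs everything against a test vector $v\in\mathfrak{V}$ and cancels all terms using \eqref{2.9} together with \eqref{2.7} applied to the swapped pair (your $(y,x)$, the paper's $(b,a)$). For the underlying Lie-triple-system part, the paper offers no computation at all; it simply cites \cite{Sheng}.

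The gap is in your sketch of that first part. Dualizing does \emph{not} turn \eqref{2.4} for $\eta:=-\theta^*\tau$ into ``\eqref{2.4} for $\theta$ with the arguments permuted by $\tau$.'' Because transposition reverses the order of composition, pairing the required identity for $\eta$ against $v$ yields the condition
\[
\theta(y,x)\theta(b,a)-\theta(a,x)\theta(b,y)-\theta([y,a,b],x)-\theta(b,x)D(y,a)=0 ,
\]
in which the bracket occupies the \emph{first} slot of $\theta$ and $D$ acts on the \emph{right}; no relabeling of $a,b,x,y$ in \eqref{2.4} produces an identity of this shape, since there the bracket always sits in the second slot and $D$ always acts on the left. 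The identity is true, but it needs an actual derivation: take \eqref{2.4} with $(a,b,x,y)$ replaced by $(y,x,b,a)$ to rewrite $\theta(y,x)\theta(b,a)-\theta(a,x)\theta(b,y)$ as $\theta(b,[a,y,x])+D(y,a)\theta(b,x)$ (using $D(a,y)=-D(y,a)$); then \eqref{2.5} with $(a,b,x,y)$ replaced by $(b,x,y,a)$ converts $\theta(b,x)D(y,a)-D(y,a)\theta(b,x)$ into $-\theta([y,a,b],x)-\theta(b,[y,a,x])$; what remains is $\theta(b,[a,y,x])+\theta(b,[y,a,x])$, which vanishes by \eqref{2.1}. (Your claim is fine for \eqref{2.5}, which does dualize to itself with $a$ and $b$ exchanged.) So this step is not mere bookkeeping as stated: either carry out the derivation above, or do as the paper does and invoke \cite{Sheng}.
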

\begin{proof}
Following \cite{Sheng},  we can easily get $(\mathfrak{V}^*; -\theta^*\tau)$ is a representation of the Lie triple system $(\mathfrak{L}, [\cdot, \cdot, \cdot ])$.
Moreover, for any $a,b\in \mathfrak{L},v\in \mathfrak{V}$ and $u^*\in \mathfrak{V}^*,$ by Eqs.  \eqref{2.7} and \eqref{2.9}, we have
\begin{align*}
&\langle -\theta^*\tau(d(a),b)u^*-\theta^*\tau(a,d(b))u^*+\theta^*\tau(a,b)d^*_\mathfrak{V}u^*-\lambda\theta^*\tau(a,b)u^*- d^*_\mathfrak{V}\theta^*\tau(a,b)u^*,v \rangle\\
=&\langle u^*, \theta(b, d(a))v+\theta(d(b),a)v-d_\mathfrak{V}(\theta(b,a)v)+\lambda\theta(b,a)v+\theta(b,a)d_\mathfrak{V}(v) \rangle\\
=&0,
\end{align*}
which implies that $(\mathfrak{V}^*; -\theta^*\tau,-d^*_\mathfrak{V})$ is a representation of $(\mathfrak{L}, [\cdot, \cdot, \cdot ],d)$.

\end{proof}

\begin{exam}
Let  $(\mathfrak{L};    \mathcal{R}, d)$ be an adjoint representation of the modified $\lambda$-differential Lie triple system  $(\mathfrak{L}, [\cdot, \cdot,\cdot],   d)$.
Then, $(\mathfrak{L}^*;    -\mathcal{R}^*\tau, -d^*)$ is a dual adjoint representation of   $(\mathfrak{L}, [\cdot, \cdot,\cdot],   d)$.
\end{exam}

 \section{Cohomology of modified $\lambda$-differential Lie triple systems}\label{sec:cohomologydo}\
 \def\theequation{\arabic{section}.\arabic{equation}}
\setcounter{equation} {0}

In this section, we   study  the cohomology of a modified $\lambda$-differential Lie triple system with
coefficients in its representation.

Let $(\mathfrak{V};  \theta)$  be a representation of a  Lie triple system $(\mathfrak{L}, [\cdot, \cdot, \cdot ])$.
Denote the $(2n+1)-$cochains   of $\mathfrak{L}$ with coefficients in representation $(\mathfrak{V};  \theta)$   by
\begin{align*}
\mathcal{C}_{\mathrm{Lts}}^{2n+1}(\mathfrak{L},\mathfrak{V}):=\big\{f\in \mathrm{Hom}(\mathfrak{L}^{\otimes2n+1},\mathfrak{V}) ~|&~f(a_1,\cdots, a_{2n-2},a,b,c)+f(a_1,\cdots, a_{2n-2},b,a,c)=0,\\
&~\circlearrowright_{a,b,c}f(a_1,\cdots, a_{2n-2},a,b,c)=0  \big\}.
\end{align*}

The Yamaguti coboundary operator $\delta: \mathcal{C}_{\mathrm{Lts}}^{2n-1}(\mathfrak{L},\mathfrak{V})\rightarrow \mathcal{C}_{\mathrm{Lts}}^{2n+1}(\mathfrak{L},\mathfrak{V})$,  for $a_1,\cdots,a_{2n+1}\in \mathfrak{L}$ and $f\in \mathcal{C}_{\mathrm{Lts}}^{2n-1}(\mathfrak{L},\mathfrak{V})$, as
\begin{align*}
&\delta f(a_1,\cdots, a_{2n+1})\\
=&\theta(a_{2n},a_{2n+1})f(a_1,\cdots, a_{2n-1})-\theta(a_{2n-1},a_{2n+1})f(a_1,\cdots, a_{2n-2},a_{2n})\\
&+\sum_{i=1}^n(-1)^{i+n}D(a_{2i-1},a_{2i})f(a_1,\cdots, a_{2i-2},a_{2i+1},\cdots, a_{2n+1})\\
&+\sum_{i=1}^n\sum_{j=2i+1}^{2n+1}(-1)^{i+n+1}f(a_1,\cdots, a_{2i-2},a_{2i+1},\cdots, [a_{2i-1},a_{2i},a_{j}],\cdots,a_{2n+1}).
\end{align*}
So $\delta\circ\delta=0.$
One can refer to \cite{Jacobson1,Yamaguti,Kubo,Zhang} for more information about  Lie triple systems and cohomology theory.\\

Next, we introduce a cohomology of a modified $\lambda$-differential Lie triple system   with coefficients in a representation.

We first give the following lemma.
\begin{lemma}\label{lemma:cochain map}
Let $(\mathfrak{V};  \theta, d_\mathfrak{V})$  be a representation of a modified $\lambda$-differential Lie triple system $(\mathfrak{L}, [\cdot, \cdot, \cdot ],d)$.
For any $n\geq 1$, we define a linear map $\Phi: \mathcal{C}_{\mathrm{Lts}}^{2n-1}(\mathfrak{L},\mathfrak{V})\rightarrow \mathcal{C}_{\mathrm{Lts}}^{2n-1}(\mathfrak{L},\mathfrak{V})$ by
\begin{align*}
\Phi f(a_1,\cdots, a_{2n-1})=&\sum_{i=1}^{2n-1}f(a_1,\cdots,d(a_{i}),\cdots,a_{2n-1})\\
&+(n-1)\lambda f(a_1,\cdots,a_{2n-1})-d_\mathfrak{V}(f(a_1,\cdots, a_{2n-1})),
\end{align*}
for  any $f\in \mathcal{C}_{\mathrm{Lts}}^{2n-1}(\mathfrak{L},\mathfrak{V})$ and $a_1,\cdots, a_{2n-1}\in \mathfrak{L}.$
Then, $\Phi$ is a cochain map, that is, the following diagram is commutative:
$$\aligned
\xymatrix{
  \mathcal{C}^{2n-1}_{\mathrm{Lts}}(\mathfrak{L},\mathfrak{V})\ar[r]^-{\delta}\ar[d]^-{\Phi}& \mathcal{C}^{2n+1}_{\mathrm{Lts}}(\mathfrak{L},\mathfrak{V})\ar[d]^{\Phi}\\
  \mathcal{C}^{2n-1}_{{\mathrm{Lts}}}(\mathfrak{L},\mathfrak{V})\ar[r]^-{\delta}& \mathcal{C}^{2n+1}_{{\mathrm{Lts}}}(\mathfrak{L},\mathfrak{V}).}
 \endaligned$$
\end{lemma}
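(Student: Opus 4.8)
The plan is to prove the commutation $\Phi\circ\delta=\delta\circ\Phi$ by a direct term-by-term computation, organized so that the degree-dependent weight factor is isolated. Observe that $\Phi$ splits as $\Phi f = Nf + (m-1)\lambda f$ on $\mathcal{C}^{2m-1}_{\mathrm{Lts}}(\mathfrak{L},\mathfrak{V})$, where
$$Nf(a_1,\dots,a_{2m-1})=\sum_{i=1}^{2m-1}f(a_1,\dots,d(a_i),\dots,a_{2m-1})-d_\mathfrak{V}\bigl(f(a_1,\dots,a_{2m-1})\bigr)$$
is the part of $\Phi$ that does not see the degree. Since $\delta$ raises degree from $2n-1$ to $2n+1$, the scalar part contributes $n\lambda$ on the image side but only $(n-1)\lambda$ on the source side, so
$$\Phi(\delta f)-\delta(\Phi f)=N(\delta f)-\delta(Nf)+\lambda\,\delta f .$$
Hence the whole statement reduces to the single commutator identity $N\circ\delta-\delta\circ N=-\lambda\,\delta$ on $\mathcal{C}^{2n-1}_{\mathrm{Lts}}(\mathfrak{L},\mathfrak{V})$.

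To establish this identity I would split $\delta f$ into its four families of summands — the two $\theta$-terms $T_1,T_2$, the $D$-term sum $T_3$, and the bracket-term double sum $T_4$ — and show that each family $T_k$ contributes exactly $-\lambda T_k$ to $N\delta f-\delta Nf$. The mechanism is uniform: $N(\delta f)$ distributes $d$ over all $2n+1$ outer arguments and subtracts one $d_\mathfrak{V}$, whereas $\delta(Nf)$ distributes $d$ over the $2n-1$ arguments of $f$ appearing inside each $\delta$-summand and subtracts $d_\mathfrak{V}$ there. The contributions in which $d$ lands on an argument that $f$ receives plainly cancel between the two sides, and the surviving terms are precisely those in which $d$ meets the structure maps $\theta$, $D$ or $[\cdot,\cdot,\cdot]$.

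These surviving terms are reconciled by the compatibility axioms. For the $\theta$-terms $T_1,T_2$ one applies \eqref{2.7}: expanding $d_\mathfrak{V}(\theta(a_{2n},a_{2n+1})f)$ produces $\theta(d(a_{2n}),a_{2n+1})f+\theta(a_{2n},d(a_{2n+1}))f+\theta(a_{2n},a_{2n+1})d_\mathfrak{V}(f)+\lambda\,\theta(a_{2n},a_{2n+1})f$, and after cancellation the only residue is the weight term $-\lambda T_1$ (similarly $-\lambda T_2$). For $T_3$ the identical computation with \eqref{2.8} in place of \eqref{2.7} yields $-\lambda T_3$. For $T_4$ one uses the weight-$\lambda$ Leibniz rule \eqref{2.6}: when $d$ hits the inner bracket inside $\delta(Nf)$ it expands as $[d(a_{2i-1}),a_{2i},a_j]+[a_{2i-1},d(a_{2i}),a_j]+[a_{2i-1},a_{2i},d(a_j)]+\lambda[a_{2i-1},a_{2i},a_j]$; the first three terms cancel against the places where $d$ landed on $a_{2i-1},a_{2i},a_j$ inside $N(\delta f)$, and the surviving $\lambda$-term gives $-\lambda T_4$. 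Summing the four contributions gives $N\delta f-\delta Nf=-\lambda\,\delta f$, as required.

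The main obstacle is the bookkeeping in the $T_4$ family: one must track the double index $(i,j)$, the sign $(-1)^{i+n+1}$, and verify that every ``$d$ on an inner bracket slot'' term produced by \eqref{2.6} in $\delta(Nf)$ is matched by exactly one ``$d$ on the corresponding outer slot'' term in $N(\delta f)$, with no unmatched boundary terms left over. A preliminary (routine) check that $\Phi$ indeed maps $\mathcal{C}^{2n-1}_{\mathrm{Lts}}(\mathfrak{L},\mathfrak{V})$ into itself — i.e. that $\Phi f$ inherits the skew-symmetry and cyclic relations defining the cochain space — should also be recorded, since it uses only the linearity of $d$, $d_\mathfrak{V}$ and the scalar $\lambda$ and hence is immediate from the corresponding relations for $f$.
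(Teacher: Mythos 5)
Your proposal is correct, and it takes a genuinely different (more structured and more general) route than the paper. The paper's own proof is a direct verification at $n=1$ only: it expands $\Phi\circ\delta f(a_1,a_2,a_3)$ term by term using Eqs.~\eqref{2.6}, \eqref{2.7} and \eqref{2.8}, cancels until $\delta\circ\Phi f(a_1,a_2,a_3)$ appears, and dismisses the higher degrees with ``the proof is similar.'' You instead isolate the degree-independent operator $N$, reduce the statement for \emph{all} $n$ to the single commutator identity $N\circ\delta-\delta\circ N=-\lambda\,\delta$, and verify that identity family-by-family: the two $\theta$-summands of $\delta$ each contribute $-\lambda$ times themselves via \eqref{2.7}, the $D$-summands via \eqref{2.8}, and the bracket double sum via \eqref{2.6}, with all non-$\lambda$ terms matching one-to-one between $N(\delta f)$ and $\delta(Nf)$ --- I checked the $T_4$ bookkeeping and there are indeed no unmatched boundary terms. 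What your approach buys is precisely the piece of bookkeeping the paper's ``similar'' glosses over: the weight scalar jumps from $(n-1)\lambda$ on the source to $n\lambda$ on the target of $\delta$, producing the residual $+\lambda\,\delta f$ that your commutator identity must absorb; this interaction is invisible in the paper's computation because at $n=1$ the source scalar $(1-1)\lambda$ vanishes, so the paper's displayed case is actually the least representative one. What the paper's approach buys is brevity and a concrete worked instance; yours is the argument one would want on record if the general case were ever questioned. Your closing remark that $\Phi$ preserves the defining relations of $\mathcal{C}^{2n-1}_{\mathrm{Lts}}(\mathfrak{L},\mathfrak{V})$ is also a point the paper silently assumes, and it is indeed immediate from linearity and the corresponding relations for $f$.
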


\begin{proof}
We only prove the case of $n=1$. In the other cases, the proof is similar.
For all $f\in \mathcal{C}_{\mathrm{Lts}}^{1}(\mathfrak{L},\mathfrak{V})$ and $a_1,a_2, a_{3}\in \mathfrak{L},$  by Eqs. \eqref{2.6}, \eqref{2.7} and \eqref{2.8},  we have
\begin{align*}
&\Phi\circ\delta f(a_1,a_2,a_3)\\
=&\delta f(d(a_1),a_{2},a_{3})+\delta f(a_1,d(a_{2}),a_{3})+\delta f(a_1,a_{2},d(a_{3}))+\lambda \delta f(a_1,a_2,a_{3})-d_\mathfrak{V}(\delta f(a_1,a_2, a_{3}))\\
=&\theta(a_2,a_3)f(d(a_1))-\theta(d(a_1),a_3)f(a_2)+D(d(a_1),a_2)f(a_3)-f([d(a_1),a_2,a_3])\\
&+\theta(d(a_{2}),a_3)f(a_1)-\theta(a_1,a_3)f(d(a_{2}))+D(a_1,d(a_{2}))f(a_3)-f([a_1,d(a_{2}),a_3])\\
&+\theta(a_2,d(a_{3}))f(a_1)-\theta(a_1,d(a_{3}))f(a_2)+D(a_1,a_2)f(d(a_{3}))-f([a_1,a_2,d(a_{3})])\\
&+\lambda\theta(a_2,a_3)f(a_1)-\lambda\theta(a_1,a_3)f(a_2)+\lambda D(a_1,a_2)f(a_3)-\lambda f([a_1,a_2,a_3])\\
&-d_\mathfrak{V}(\theta(a_2,a_3)f(a_1))+d_\mathfrak{V}(\theta(a_1,a_3)f(a_2))-d_\mathfrak{V}(D(a_1,a_2)f(a_3))+d_\mathfrak{V}(f([a_1,a_2,a_3]))\\
=&\theta(a_2,a_3)f(d(a_1))-\theta(a_2,a_3)d_\mathfrak{V}(f(a_1))-\theta(a_1,a_3)f(d(a_{2}))+\theta(a_1,a_3)d_\mathfrak{V}(f(a_2))\\
&+D(a_1,a_2)f(d(a_{3}))-D(a_1,a_2)d_\mathfrak{V} f(a_3)- f(d([a_1,a_2,a_3]))+d_\mathfrak{V}(f([a_1,a_2,a_3]))\\
=&\theta(a_2,a_3)\Phi f(a_1)-\theta(a_1,a_3)\Phi f(a_2)+D(a_1,a_2)\Phi f(a_3)-\Phi f([a_1,a_2,a_3])\\
=&\delta\circ\Phi f(a_1,a_2,a_3).
\end{align*}
Therefore, $\Phi\circ\delta=\delta\circ\Phi.$
\end{proof}

Define the set of $(2n-1)$-cochains by
\begin{equation*}\label{eq:dac}
\mathcal{C}_{\mathrm{mDLts^\lambda}}^{2n-1}(\mathfrak{L},\mathfrak{V}):=
\begin{cases}
\mathcal{C}^{2n-1}_{\mathrm{Lts}}(\mathfrak{L},\mathfrak{V})\oplus \mathcal{C}^{2n-3}_{\mathrm{Lts}}(\mathfrak{L},\mathfrak{V}),&n\geq 2,\\
\mathcal{C}^{1}_{\mathrm{Lts}}(\mathfrak{L},\mathfrak{V})=\mathrm{Hom}(\mathfrak{L},\mathfrak{V}),&n=1.
\end{cases}
\end{equation*}

For $n \geq 2$,   we define   linear map  $\partial:\mathcal{C}_{\mathrm{mDLts^\lambda}}^{2n-1}(\mathfrak{L},\mathfrak{V})\rightarrow \mathcal{C}_{\mathrm{mDLts^\lambda}}^{2n+1}(\mathfrak{L},\mathfrak{V})$  by

\begin{align*}
\partial(f,g)=(\delta f, \delta g+(-1)^n \Phi f), \forall (f,g)\in \mathcal{C}_{\mathrm{mDLts^\lambda}}^{2n-1}(\mathfrak{L},\mathfrak{V}).
\end{align*}

When $n=1$,   define  the linear map  $\partial:\mathcal{C}_{\mathrm{mDLts^\lambda}}^{1}(\mathfrak{L},\mathfrak{V})\rightarrow \mathcal{C}_{\mathrm{mDLts^\lambda}}^{3}(\mathfrak{L},\mathfrak{V})$  by
\begin{align*}
\partial(f)=(\delta f, -\Phi f), \forall f\in \mathcal{C}_{\mathrm{mDLts^\lambda}}^{1}(\mathfrak{L},\mathfrak{V}).
\end{align*}

\begin{theorem}\label{thm: cochain complex for differential algebras}
The pair $(\mathcal{C}_{\mathrm{mDLts^\lambda}}^*(\mathfrak{L}, \mathfrak{V}),\partial)$ is a cochain complex. So $\partial\circ\partial=0.$
\end{theorem}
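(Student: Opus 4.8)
The plan is to verify $\partial\circ\partial=0$ by a direct computation on a general cochain, relying on only two previously established facts: the Yamaguti identity $\delta\circ\delta=0$ recorded above, and the commutation relation $\Phi\circ\delta=\delta\circ\Phi$ proved in Lemma \ref{lemma:cochain map}. The only subtlety is to keep careful track of how the index $n$ shifts under $\partial$: since $\partial$ sends $\mathcal{C}_{\mathrm{mDLts^\lambda}}^{2n-1}(\mathfrak{L},\mathfrak{V})$ to $\mathcal{C}_{\mathrm{mDLts^\lambda}}^{2n+1}(\mathfrak{L},\mathfrak{V})$, it advances $n$ to $n+1$ and thereby flips the sign $(-1)^n$ appearing in the definition of $\partial$ to $(-1)^{n+1}$ on the second application.

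Concretely, for $n\geq 2$ I would take a pair $(f,g)$ with $f\in\mathcal{C}^{2n-1}_{\mathrm{Lts}}(\mathfrak{L},\mathfrak{V})$ and $g\in\mathcal{C}^{2n-3}_{\mathrm{Lts}}(\mathfrak{L},\mathfrak{V})$, compute $\partial(f,g)=(\delta f,\;\delta g+(-1)^n \Phi f)$, and then apply $\partial$ again at level $n+1$. This gives
$$\partial\circ\partial(f,g)=\bigl(\delta\delta f,\;\delta(\delta g+(-1)^n\Phi f)+(-1)^{n+1}\Phi(\delta f)\bigr).$$
The first component vanishes because $\delta\circ\delta=0$. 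For the second component, $\delta\delta g=0$ again by $\delta\circ\delta=0$, leaving $(-1)^n\delta\Phi f+(-1)^{n+1}\Phi\delta f=(-1)^n(\delta\Phi f-\Phi\delta f)$, which is zero by the commutativity $\delta\circ\Phi=\Phi\circ\delta$ of Lemma \ref{lemma:cochain map}. Hence both components vanish and $\partial\circ\partial(f,g)=0$.

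The boundary case $n=1$ should be treated separately, since there $\partial(f)=(\delta f,-\Phi f)$ has a single input, but the argument is identical: applying $\partial$ at level $n=2$ produces $\bigl(\delta\delta f,\;-\delta\Phi f+\Phi\delta f\bigr)$, whose first entry is zero by $\delta\circ\delta=0$ and whose second entry is zero by $\Phi\circ\delta=\delta\circ\Phi$. I expect the main (and essentially only) obstacle to be the sign bookkeeping: one must ensure the factor $(-1)^n$ is correctly flipped to $(-1)^{n+1}$ on the second application of $\partial$, so that the two cross-terms $(-1)^n\delta\Phi f$ and $(-1)^{n+1}\Phi\delta f$ genuinely cancel rather than reinforce. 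Once this index shift is tracked correctly, the theorem follows immediately from the two lemmas, with no further computation required.
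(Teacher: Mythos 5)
Your proposal is correct and follows essentially the same route as the paper: both verify $\partial\circ\partial=0$ directly from $\delta\circ\delta=0$ and the commutation $\Phi\circ\delta=\delta\circ\Phi$ of Lemma \ref{lemma:cochain map}, treating $n=1$ and $n\geq 2$ separately, with the key point being exactly the sign flip from $(-1)^n$ to $(-1)^{n+1}$ on the second application of $\partial$ so that the cross-terms cancel.
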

\begin{proof}
For any $f\in \mathcal{C}_{\mathrm{mDLts^\lambda}}^{1}(\mathfrak{L},\mathfrak{V})$, by Lemma \ref{lemma:cochain map}, we have
$$\partial\circ\partial(f) =\partial(\delta f, -\Phi f)=(\delta (\delta f), \delta (-\Phi f)+\Phi( \delta f))=0.$$
Given any $(f,g)\in \mathcal{C}_{\mathrm{mDLts^\lambda}}^{2n-1}(\mathfrak{L},\mathfrak{V})$ with $n\geq 2$,  we have
$$\partial\circ\partial(f,g)=\partial(\delta f, \delta g+(-1)^n \Phi f)=(\delta (\delta f), \delta (\delta g+(-1)^n \Phi f)+(-1)^{n+1} \Phi (\delta f))=0.$$
Therefore,   $(\mathcal{C}_{\mathrm{mDLts^\lambda}}^*(\mathfrak{L}, \mathfrak{V}),\partial)$ is a cochain complex.
\end{proof}

\begin{defn}
  The cohomology of the cochain complex $(\mathcal{C}_{\mathrm{mDLts^\lambda}}^*(\mathfrak{L},\mathfrak{ V}),\partial)$, denoted by $\mathcal{H}_{\mathrm{mDLts^\lambda}}^*(\mathfrak{L},$
  $ \mathfrak{V})$, is called the   cohomology of the  modified $\lambda$-differential Lie triple system $(\mathfrak{L}, [\cdot, \cdot, \cdot ],d)$  with coefficients in the representation    $(\mathfrak{V};  \theta, d_\mathfrak{V})$.
\end{defn}

For $n\geq 1$,  we denote the set of $(2n+1)$-cocycles by
$\mathcal{Z}^{2n+1}_{\mathrm{mDLts^\lambda}}(\mathfrak{L},\mathfrak{ V})=\big\{(f,g)\in\mathcal{C}^{2n+1}_{\mathrm{mDLts^\lambda}}(\mathfrak{L},\mathfrak{ V})~|~$
$\partial (f,g)=0 \big\}$, the set of $(2n+1)$-coboundaries by $\mathcal{B}^{2n+1}_{\mathrm{mDLts^\lambda}}(\mathfrak{L},\mathfrak{ V})=\{\partial (f,g)~|~(f,g)\in\mathcal{C}^{2n-1}_{\mathrm{mDLts^\lambda}}(\mathfrak{L},\mathfrak{ V})\}$ and the $(2n+1)$-th cohomology group of the  modified $\lambda$-differential Lie triple system  $(\mathfrak{L}, [\cdot, \cdot, \cdot ],d)$  with coefficients in the representation    $(\mathfrak{V};  \theta, d_\mathfrak{V})$ by $\mathcal{H}^{2n+1}_{\mathrm{mDLts^\lambda}}(\mathfrak{L},\mathfrak{ V})=\mathcal{Z}^{2n+1}_{\mathrm{mDLts^\lambda}}(\mathfrak{L},\mathfrak{ V})/\mathcal{B}^{2n+1}_{\mathrm{mDLts^\lambda}}(\mathfrak{L},\mathfrak{ V})$.

To end this section, we compute  1-cocycles and 3-cocycles of the  modified $\lambda$-differential Lie triple system  $(\mathfrak{L}, [\cdot, \cdot, \cdot ],d)$  with coefficients in the representation    $(\mathfrak{V};  \theta, d_\mathfrak{V})$.

It is obvious that for all $f\in \mathcal{C}_{\mathrm{mDLts^\lambda}}^{1}(\mathfrak{L},\mathfrak{V})$, $f$ is a 1-cocycle if and only if $\partial(f)=(\delta f, -\Phi f)=0,$ that is,
$$\theta(a_2,a_3)f(a_1)-\theta(a_1,a_3)f(a_2)+D(a_1,a_2)f(a_3)-f([a_1,a_2,a_3])=0$$
and
$$d_\mathfrak{V}(f(a_1))-f(d(a_1))=0.$$

For all $(f,g)\in \mathcal{C}_{\mathrm{mDLts^\lambda}}^{3}(\mathfrak{L},\mathfrak{V})$, $(f,g)$ is a 3-cocycle if and only if $\partial(f,g)=(\delta f, \delta g+\Phi f)=0,$ that is,
\begin{align*}
&\theta(a_4,a_5)f(a_1,a_2,a_3)-\theta(a_3,a_5)f(a_1,a_2,a_4)-D(a_1,a_2)f(a_3,a_4,a_5)+D(a_3,a_4)f(a_1,a_2,a_5)\\
&+f([a_1,a_2,a_3],a_4,a_5)+f(a_3,[a_1,a_2,a_4],a_5)+f(a_3,a_4,[a_1,a_2,a_5])-f(a_1,a_2,[a_3,a_4,a_5])=0
\end{align*}
and
\begin{align*}
&\theta(a_2,a_3)g(a_1)-\theta(a_1,a_3)g(a_2)+D(a_1,a_2)g(a_3)-g([a_1,a_2,a_3])\\
&+f(d(a_1),a_2,a_3)+f(a_1,d(a_2),a_3)+f(a_1,a_2,d(a_3))+\lambda f(a_1,a_2,a_3)-d_\mathfrak{V}(f(a_1,a_2,a_3))=0.
\end{align*}

\section{Deformations of modified $\lambda$-differential Lie triple systems}\label{sec:def}
\def\theequation{\arabic{section}.\arabic{equation}}
\setcounter{equation} {0}

 In this section, we  introduce formal deformations  of  the modified $\lambda$-differential Lie triple system. Furthermore, we show that if the third cohomology group $\mathcal{H}_{\mathrm{mDLts^\lambda}}^3(\mathfrak{L}, \mathfrak{L})=0$, then the   modified $\lambda$-differential Lie triple system $(\mathfrak{L}, [\cdot, \cdot, \cdot ],d)$ is rigid.

 Let $(\mathfrak{L}, [\cdot, \cdot, \cdot ],d)$ be a   modified $\lambda$-differential Lie triple system. Denote by $\nu$ the multiplication of $\mathfrak{L}$, i.e., $\nu=[\cdot, \cdot, \cdot ]$.
Consider the 1-parameterized family
$$\nu_t=\sum_{i=0}^{\infty} \nu_i t^i, \, \, \nu_i\in \mathcal{C}^3_{\mathrm{Lts}}(\mathfrak{L}, \mathfrak{L}),\quad
 d_t=\sum_{i=0}^{\infty} d_i t^i, \, \, d_i\in \mathcal{C}^1_{\mathrm{Lts}}(\mathfrak{L}, \mathfrak{L}).$$

\begin{defn}
A  1-parameter formal deformation of the modified $\lambda$-differential Lie triple system $(\mathfrak{L}, \nu,d)$  is a pair $(\nu_t, d_t)$ which endows the $\mathbb{K}[[t]]$-module $(\mathfrak{L}[[t]], \nu_t, d_t)$ with the modified $\lambda$-differential Lie triple system   {over $\mathbb{K}[[t]]$} such that $(\nu_0, d_0)=(\nu, d)$.
\end{defn}

Obviously, $(\mathfrak{L}[[t]], \nu_t=\nu, d_t=d)$ is a 1-parameter formal deformation  of  $(\mathfrak{L}, \nu,d)$.

 The pair $(\nu_t, d_t)$ generates a 1-parameter formal deformation  of the modified $\lambda$-differential Lie triple system $(\mathfrak{L}, [\cdot, \cdot, \cdot ],d)$  if and only if for all $a, b, c,x,y\in \mathfrak{L}$, the following equations hold:
\begin{align}
&\nu_t(a,b,c)+\nu_t(b,a,c)=0,\label{4.1}\\
&\nu_t(a,b,c))+\nu_t(b,c,a)+\nu_t(c,a,b)=0,\label{4.2}\\
&\nu_t(x,y,\nu_t(a,b,c))=\nu_t(\nu_t(x,y,a),b,c)+\nu_t(a,\nu_t(x,y,b),c)+\nu_t(a,b,\nu_t(x,y,c)),\label{4.3}\\
 &d_t(\nu_t(a,b,c))=\nu_t(d_t(a),b,c)+\nu_t(a, d_t(b),c)+\nu_t(a, b, d_t(c))+\lambda \nu_t(a,b,c).\label{4.4}
\end{align}
Comparing the coefficients of $t^n$ on both sides of the above equations, Eqs. \eqref{4.1}-\eqref{4.4} are equivalent to the following equations:
\begin{align}
&\nu_n(a,b,c)+\nu_n(b,a,c)=0,\label{4.5}\\
&\nu_n(a,b,c))+\nu_n(b,c,a)+\nu_n(c,a,b)=0,\label{4.6}\\
&\sum_{i+j=n}\nu_i(x,y,\nu_j(a,b,c))=\sum_{i+j=n}(\nu_i(\nu_j(x,y,a),b,c)+\nu_i(a,\nu_j(x,y,b),c)+\nu_i(a,b,\nu_j(x,y,c))),\label{4.7}\\
 &\sum_{i+j=n}d_i(\nu_j(a,b,c))=\sum_{i+j=n}(\nu_i(d_j(a),b,c)+\nu_i(a, d_j(b),c)+\nu_i(a, b, d_j(c)))+\lambda \nu_n(a,b,c).\label{4.8}
\end{align}

 \begin{prop}\label{prop:fddco}
Let $(\mathfrak{L}[[t]], \nu_t, d_t)$ be a $1$-parameter formal deformation of the modified $\lambda$-differential Lie triple system $(\mathfrak{L}, \nu,d)$. Then $(\nu_1, d_1)$ is a 3-cocycle of $(\mathfrak{L}, \nu,d)$ with the coefficient  in the adjoint representation $(\mathfrak{L};\mathcal{R},d)$.	
\end{prop}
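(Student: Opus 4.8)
The plan is to unwind the definition of a $3$-cocycle for the adjoint representation $(\mathfrak{L};\mathcal{R},d)$ and match it term-by-term with the coefficient of $t$ in the deformation equations. By the formulas at the end of Section \ref{sec:cohomologydo}, the pair $(\nu_1,d_1)\in\mathcal{C}^3_{\mathrm{mDLts^\lambda}}(\mathfrak{L},\mathfrak{L})$ is a $3$-cocycle exactly when $\partial(\nu_1,d_1)=(\delta\nu_1,\ \delta d_1+\Phi\nu_1)=0$, i.e. when $\delta\nu_1=0$ and $\delta d_1+\Phi\nu_1=0$. First I would observe that $\nu_1$ genuinely lies in $\mathcal{C}^3_{\mathrm{Lts}}(\mathfrak{L},\mathfrak{L})$: the $n=1$ instances of Eqs. \eqref{4.5} and \eqref{4.6} are precisely the skew-symmetry and cyclic conditions cutting out this cochain space. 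The whole argument then rests on one translation: in the adjoint representation $\theta=\mathcal{R}$ acts by $\theta(u,v)w=[w,u,v]$, and the operator $D$ satisfies $D(u,v)w=[w,v,u]-[w,u,v]=[u,v,w]$, where the last equality is obtained from \eqref{2.1} and \eqref{2.2}.

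For $\delta\nu_1=0$ I would extract the coefficient of $t$ from the fundamental identity \eqref{4.7} with $n=1$, setting $\nu_0=\nu$ and collecting the mixed $\nu$--$\nu_1$ terms. Using the dictionary above, the left-multiplication terms $[x,y,\nu_1(a,b,c)]$ and $[a,b,\nu_1(x,y,c)]$ become $D(x,y)\nu_1(a,b,c)$ and $D(a,b)\nu_1(x,y,c)$, while the term $[a,\nu_1(x,y,b),c]$ is rewritten as $-[\nu_1(x,y,b),a,c]$ via \eqref{2.1}. Comparing with the explicit expression for $\delta\nu_1(x,y,a,b,c)$ then yields the identity verbatim, so $\delta\nu_1=0$.

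For $\delta d_1+\Phi\nu_1=0$ I would extract the coefficient of $t$ from the compatibility condition \eqref{4.8} with $n=1$, giving, after setting $d_0=d,\ \nu_0=\nu$,
\begin{align*}
&d(\nu_1(a,b,c))+d_1([a,b,c])\\
=&[d_1(a),b,c]+[a,d_1(b),c]+[a,b,d_1(c)]\\
&+\nu_1(d(a),b,c)+\nu_1(a,d(b),c)+\nu_1(a,b,d(c))+\lambda\nu_1(a,b,c).
\end{align*}
On the other hand, writing out $\delta d_1+\Phi\nu_1=0$ in the adjoint representation (using $\theta=\mathcal{R}$ and $D(a,b)d_1(c)=[d_1(c),b,a]-[d_1(c),a,b]$) gives
\begin{align*}
&[d_1(a),b,c]-[d_1(b),a,c]+[d_1(c),b,a]-[d_1(c),a,b]-d_1([a,b,c])\\
&+\nu_1(d(a),b,c)+\nu_1(a,d(b),c)+\nu_1(a,b,d(c))+\lambda\nu_1(a,b,c)-d(\nu_1(a,b,c))=0.
\end{align*}
These two coincide once $[a,d_1(b),c]=-[d_1(b),a,c]$ and $[a,b,d_1(c)]=[d_1(c),b,a]-[d_1(c),a,b]$ are invoked, both consequences of \eqref{2.1} and \eqref{2.2}.

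The computations are entirely elementary; the only delicate point — and where I expect the main friction — is keeping the adjoint-representation dictionary consistent, in particular the sign-sensitive identity $D(u,v)w=[u,v,w]$ and its repeated use to convert the left-multiplication brackets of the deformation equations into the $\theta$- and $D$-terms of the Yamaguti coboundary $\delta$. Once that dictionary is pinned down, the matching of both cocycle equations reduces to routine applications of \eqref{2.1} and \eqref{2.2}.
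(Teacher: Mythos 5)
Your proof is correct and takes essentially the same route as the paper's: extract the coefficient of $t$ (the $n=1$ instances of Eqs.~\eqref{4.5}--\eqref{4.8}) and identify the resulting identities with $\delta\nu_1=0$ and $\delta d_1+\Phi\nu_1=0$, so that $\partial(\nu_1,d_1)=0$. The only difference is presentational: the paper asserts these equivalences directly, while you spell out the adjoint-representation dictionary $\theta(u,v)w=[w,u,v]$, $D(u,v)w=[u,v,w]$ (and the membership of $\nu_1$ in $\mathcal{C}^3_{\mathrm{Lts}}(\mathfrak{L},\mathfrak{L})$) that the paper leaves implicit.
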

\begin{proof}
  For $n =1$, Eq.~\eqref{4.7} is equivalent to
\begin{align*}
&\nu_1(x,y,[a,b,c])+[x,y,\nu_1(a,b,c)]\\
 =& \nu_1([x,y,a],b,c)+[\nu_1(x,y,a),b,c]+\nu_1(a,[x,y,b],c)+[a,\nu_1(x,y,b),c]+\nu_1(a,b,[x,y,c])\\
 &+[a,b,\nu_1(x,y,c)],
 \end{align*}
i.e., $\delta \nu_1=0.$ In addition, for $n =1$, Eq.~\eqref{4.8} is equivalent to
\begin{align*}
&d_1([a,b,c])+ d(\nu_1(a,b,c))\\
 =& [d_1(a),b,c]+\nu_1(d(a),b,c)+[a,d_1(b),c]+\nu_1(a,d(b),c)+[a,b,d_1(c)]+\nu_1(a, b,d(c))\\
 &+\lambda\nu_1(a,b,c),
 \end{align*}
that is, $\delta d_1+ \Phi\nu_1=0.$
In other words, Eqs.~\eqref{4.7} and \eqref{4.8} are equivalent to $\partial(\nu_1,d_1)=(\delta \nu_1,\delta d_1+ \Phi\nu_1)=0.$  Therefore, 	$(\nu_1, d_1)$ is a 3-cocycle of $(\mathfrak{L}, \nu,d)$ with the coefficient  in the adjoint representation $(\mathfrak{L};\mathcal{R},d)$.	
\end{proof}

If $\nu_t=\nu$ in the above $1$-parameter formal deformation of the modified $\lambda$-differential Lie triple system $(\mathfrak{L}, \nu,d)$, we obtain a $1$-parameter formal deformation of the modified $\lambda$-differential operator $d$. So we have

 \begin{coro}\label{coro:fdo}
Let $ d_t $ be a $1$-parameter formal deformation of the modified $\lambda$-differential operator $d$.  Then $d_1$ is a 1-cocycle of the modified $\lambda$-differential operator $d$ with coefficients  in the adjoint representation $(\mathfrak{L};\mathcal{R},d)$.	
\end{coro}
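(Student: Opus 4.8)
The plan is to obtain this statement as a direct specialization of Proposition \ref{prop:fddco} to the case in which the ternary bracket is left undeformed. First I would record that imposing $\nu_t=\nu$ in the family $(\nu_t,d_t)$ forces $\nu_0=\nu$ and $\nu_i=0$ for every $i\geq 1$; in particular $\nu_1=0$. Consequently Eqs. \eqref{4.1}--\eqref{4.3} hold automatically, because $(\mathfrak{L},\nu)$ is already a Lie triple system, and the only surviving deformation condition is the modified $\lambda$-differential operator equation \eqref{4.4}.

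Next I would extract the order-$t$ coefficient of \eqref{4.4}, namely Eq. \eqref{4.8} at $n=1$, and substitute $\nu_1=0$ into it. Every term carrying a factor $\nu_1$ disappears, and what remains is exactly
$$d_1([a,b,c])=[d_1(a),b,c]+[a,d_1(b),c]+[a,b,d_1(c)],$$
for all $a,b,c\in\mathfrak{L}$. This is precisely the assertion that $d_1$ is a derivation of $(\mathfrak{L},[\cdot,\cdot,\cdot])$.

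The final step is to identify this derivation identity with the vanishing of the Yamaguti coboundary $\delta d_1$ in the adjoint representation $(\mathfrak{L};\mathcal{R},d)$, which is the $1$-cocycle condition. Writing out $\delta d_1(a,b,c)$ with $\theta=\mathcal{R}$ and $D(x,y)=\mathcal{R}(y,x)-\mathcal{R}(x,y)$, the terms $\mathcal{R}(b,c)d_1(a)$ and $-\mathcal{R}(a,c)d_1(b)$ reproduce $[d_1(a),b,c]$ and $[a,d_1(b),c]$ after using the skew-symmetry \eqref{2.1}, while the remaining term $D(a,b)d_1(c)=[d_1(c),b,a]-[d_1(c),a,b]$ collapses to $[a,b,d_1(c)]$ by means of the ternary identities \eqref{2.1}--\eqref{2.2}. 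Hence $\delta d_1=0$, i.e. $d_1$ is a $1$-cocycle. Equivalently, one may bypass this bookkeeping and simply invoke Proposition \ref{prop:fddco}: the pair $(\nu_1,d_1)=(0,d_1)$ is a $3$-cocycle, and since $\nu_1=0$ the second component $\delta d_1+\Phi\nu_1$ reduces to $\delta d_1$, so the cocycle equation $\partial(0,d_1)=(0,\delta d_1)=0$ already yields $\delta d_1=0$.

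The proof involves no genuine obstacle; the only point requiring care is the bookkeeping in the last step, where one must rewrite the single $D$-term of the coboundary $\delta d_1$ into the bracket term $[a,b,d_1(c)]$ using the Lie triple identities, and observe that fixing $\nu_t=\nu$ removes the $\Phi$-contribution, so that the $3$-cocycle condition of Proposition \ref{prop:fddco} degenerates cleanly to the $1$-cocycle condition for $d_1$ alone.
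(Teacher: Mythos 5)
Your proposal is correct and follows essentially the same route as the paper: setting $\nu_t=\nu$ forces $\nu_1=0$, so Eq.~\eqref{4.8} at $n=1$ collapses to the derivation identity $d_1([a,b,c])=[d_1(a),b,c]+[a,d_1(b),c]+[a,b,d_1(c)]$, which is exactly $\delta d_1=0$ in the adjoint representation. The only additions you make beyond the paper's proof are the explicit verification that the Yamaguti coboundary terms $\mathcal{R}(b,c)d_1(a)$, $-\mathcal{R}(a,c)d_1(b)$, $D(a,b)d_1(c)$ rewrite to the three bracket terms via \eqref{2.1}--\eqref{2.2}, and the (equally valid) shortcut through Proposition~\ref{prop:fddco} with $\Phi\nu_1=0$; both are sound.
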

\begin{proof}
When $n =1$, by $\nu_1=0$ and Eq. \eqref{4.8},  we have   $d_1\in \mathrm{Der}(\mathfrak{L})$. That is, Eq. \eqref{4.8} is equivalent to $\delta d_1=0$, which implies that  $d_1$ is a 1-cocycle of the modified $\lambda$-differential operator $d$ with coefficients in the adjoint representation $(\mathfrak{L};\mathcal{R},d)$.	
\end{proof}

\begin{defn}
The $3$-cocycle $(\nu_1,d_1)$ is called the   infinitesimal  of the $1$-parameter formal deformation $(\mathfrak{L}[[t]],\nu_t,d_t)$ of $(\mathfrak{L}, \nu,d)$.
\end{defn}

\begin{defn}
Let $(\mathfrak{L}[[t]],\nu_t,d_t)$ and $(\mathfrak{L}[[t]],\nu'_t,d'_t)$ be two $1$-parameter formal deformations of $(\mathfrak{L},\nu,d)$. A
  formal isomorphism  from $(\mathfrak{L}[[t]], \nu'_t, d'_t)$ to $(\mathfrak{L}[[t]],\nu_t,d_t)$ is a power series $\varphi_t=\mathrm{id}_\mathfrak{L}+\sum_{i=1}^{\infty}\varphi_it^i:(\mathfrak{L}[[t]], \nu'_t, d'_t) \rightarrow (\mathfrak{L}[[t]],\nu_t,d_t)$, where $\varphi_i\in \mathrm{End}(\mathfrak{L})$, such that
\begin{align}
\varphi_t\circ \nu'_t=& \nu_t\circ(\varphi_t\times\varphi_t\times\varphi_t),\label{4.9}\\
\varphi_t\circ d'_t=&d_t\circ\varphi_t.\label{4.10}
\end{align}

Two $1$-parameter formal deformations $(\mathfrak{L}[[t]],\nu_t,d_t)$ and $(\mathfrak{L}[[t]], \nu'_t, d'_t)$ are said to be   equivalent  if  there exists a formal isomorphism $\varphi_t:(\mathfrak{L}[[t]], \nu'_t, d'_t) \rightarrow (\mathfrak{L}[[t]],\nu_t,d_t)$.
\end{defn}

\begin{theorem}
The infinitesimals of two equivalent $1$-parameter formal deformations of $(\mathfrak{L}, \nu,d)$ are in the same cohomology class $\mathcal{H}_{\mathrm{mDLts^{\lambda}}}^3(\mathfrak{L},\mathfrak{L})$.
\end{theorem}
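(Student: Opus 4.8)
The plan is to produce an explicit $1$-cochain whose image under $\partial$ is the difference of the two infinitesimals. Write the infinitesimals of $(\mathfrak{L}[[t]],\nu_t,d_t)$ and $(\mathfrak{L}[[t]],\nu'_t,d'_t)$ as $(\nu_1,d_1)$ and $(\nu'_1,d'_1)$; by Proposition \ref{prop:fddco} both are $3$-cocycles in $\mathcal{Z}^3_{\mathrm{mDLts^\lambda}}(\mathfrak{L},\mathfrak{L})$. Equivalence supplies a formal isomorphism $\varphi_t=\mathrm{id}_\mathfrak{L}+\sum_{i\geq1}\varphi_it^i$ satisfying \eqref{4.9} and \eqref{4.10}. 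The natural candidate for the primitive is the linear coefficient $\varphi_1\in\mathrm{End}(\mathfrak{L})=\mathcal{C}^1_{\mathrm{mDLts^\lambda}}(\mathfrak{L},\mathfrak{L})$, and I will show that $(\nu_1-\nu'_1,\,d_1-d'_1)=\partial(-\varphi_1)$.

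First I would extract the coefficient of $t^1$ from \eqref{4.9}. Using $\varphi_0=\mathrm{id}_\mathfrak{L}$ and $\nu_0=\nu'_0=[\cdot,\cdot,\cdot]$, this yields $\nu_1(a,b,c)-\nu'_1(a,b,c)=\varphi_1([a,b,c])-[\varphi_1(a),b,c]-[a,\varphi_1(b),c]-[a,b,\varphi_1(c)]$. Next I would compute $\delta\varphi_1$ in the adjoint representation, where $\theta=\mathcal{R}$ and $D(a,b)(x)=[a,b,x]$; the defining formula for $\delta$ on $\mathcal{C}^1$ gives $\delta\varphi_1(a,b,c)=[\varphi_1(a),b,c]-[\varphi_1(b),a,c]+[a,b,\varphi_1(c)]-\varphi_1([a,b,c])$, and the skew-symmetry \eqref{2.1} turns $-[\varphi_1(b),a,c]$ into $[a,\varphi_1(b),c]$. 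Comparing the two expressions shows $\nu_1-\nu'_1=-\delta\varphi_1=\delta(-\varphi_1)$.

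Then I would extract the coefficient of $t^1$ from \eqref{4.10}. With $\varphi_0=\mathrm{id}_\mathfrak{L}$ and $d_0=d'_0=d$, this gives $d_1(a)-d'_1(a)=\varphi_1(d(a))-d(\varphi_1(a))$. Since for $n=1$ the cochain map of Lemma \ref{lemma:cochain map} reads $\Phi f(a)=f(d(a))-d_\mathfrak{V}(f(a))$ and $d_\mathfrak{V}=d$ in the adjoint representation, the right-hand side is exactly $\Phi\varphi_1(a)$. Hence $d_1-d'_1=\Phi\varphi_1=-\Phi(-\varphi_1)$. Combining the two computations with $\psi:=-\varphi_1$ yields $\partial\psi=(\delta\psi,-\Phi\psi)=(\nu_1-\nu'_1,\,d_1-d'_1)$, so $(\nu_1,d_1)$ and $(\nu'_1,d'_1)$ differ by a coboundary and define the same class in $\mathcal{H}^3_{\mathrm{mDLts^\lambda}}(\mathfrak{L},\mathfrak{L})$.

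The routine part is the order-by-order comparison; the only genuine care is in the bookkeeping of signs. In particular, I expect the main subtlety to be recognizing that the identity \eqref{2.1} is precisely what makes the Yamaguti differential $\delta\varphi_1$ collapse into the symmetric form matching $\nu_1-\nu'_1$, and that the correct primitive is $-\varphi_1$ rather than $+\varphi_1$. The matching of the derivation component with the operator $\Phi$ is immediate once one notes $d_\mathfrak{V}=d$ for the adjoint representation, so I anticipate no substantive obstacle beyond this sign tracking.
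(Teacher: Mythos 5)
Your proposal is correct and follows essentially the same route as the paper: extract the coefficient of $t$ from Eqs. \eqref{4.9} and \eqref{4.10}, recognize the resulting expressions as $\delta\varphi_1$ (via the adjoint representation and skew-symmetry \eqref{2.1}) and $\Phi\varphi_1$, and conclude the infinitesimals differ by $\partial$ of a $1$-cochain. The only cosmetic difference is the sign convention — the paper writes $(\nu'_1,d'_1)-(\nu_1,d_1)=\partial(\varphi_1)$ while you write $(\nu_1,d_1)-(\nu'_1,d'_1)=\partial(-\varphi_1)$ — which is the same statement by linearity of $\partial$.
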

\begin{proof}
Let $\varphi_t:(\mathfrak{L}[[t]], \nu'_t, d'_t)\rightarrow (\mathfrak{L}[[t]],\nu_t,d_t)$ be a formal isomorphism. For all $a,b,c\in \mathfrak{L}$, we have
\begin{eqnarray*}
\varphi_t\circ \nu'_t(a,b,c)&=& \nu_t(\varphi_t(a),\varphi_t(b),\varphi_t(c)),\\
\varphi_t\circ d'_t(a)&=& d_t\circ\varphi_t (a).
\end{eqnarray*}
Comparing the coefficients of $t$ on both sides of the above equations, we can get
\begin{eqnarray*}
\nu'_1(a,b,c)-\nu_1(a,b,c)&=&[\varphi_1(a), b,c]+[a, \varphi_1(b),c]++[a, b,\varphi_1(c)]-\varphi_1([a, b,c]),\\
 d'_1(a)-d_1(a)&=&d(\varphi_1(a))-\varphi_1(d(a)).
\end{eqnarray*}
Therefore, we have $$(\nu'_1,d'_1)-(\nu_1,d_1)=(\delta\varphi_1,-\Phi\varphi_1)=\partial(\varphi_1)\in \mathcal{C}_{\mathrm{mDLts^{\lambda}}}^3(\mathfrak{L},\mathfrak{L}),$$ which implies that $[(\nu'_1,d'_1)]=[(\nu_1,d_1)]$ in $\mathcal{H}_{\mathrm{mDLts^{\lambda}}}^3(\mathfrak{L},\mathfrak{L})$.
\end{proof}

\begin{defn}
(i) A $1$-parameter formal deformation $(\mathfrak{L}[[t]],\nu_t,d_t)$ of $(\mathfrak{L}, \nu,d)$ is said to be   trivial  if it is equivalent to the  deformation $(\mathfrak{L}[[t]],  \nu,d)$.
\\
(ii) A  modified $\lambda$-differential Lie triple system $(\mathfrak{L}, \nu,d)$ is said to be   rigid  if every $1$-parameter formal deformation  is trivial.
\end{defn}

\begin{theorem}
If $\mathcal{H}_{\mathrm{mDLts^{\lambda}}}^3(\mathfrak{L},\mathfrak{L})=0$, then  the modified $\lambda$-differential Lie triple system $(\mathfrak{L}, \nu,d)$ is rigid.
\end{theorem}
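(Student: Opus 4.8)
The plan is to take an arbitrary $1$-parameter formal deformation $(\mathfrak{L}[[t]],\nu_t,d_t)$ of $(\mathfrak{L},\nu,d)$ and show, by eliminating the lowest-order perturbation term one degree at a time, that it is equivalent to the trivial deformation $(\mathfrak{L}[[t]],\nu,d)$. The engine of the argument is the hypothesis $\mathcal{H}_{\mathrm{mDLts^{\lambda}}}^3(\mathfrak{L},\mathfrak{L})=0$: at each stage the leading perturbation will turn out to be a $3$-cocycle, hence a coboundary, and a coboundary can be absorbed by a suitable formal isomorphism.

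First I would establish a general \emph{leading-term lemma}: if $(\nu_t,d_t)=(\nu,d)+(\nu_k,d_k)t^k+O(t^{k+1})$ with $k\geq 1$ (so that $\nu_i=0$ and $d_i=0$ for $1\leq i<k$), then $(\nu_k,d_k)$ is a $3$-cocycle, i.e. $\partial(\nu_k,d_k)=0$. This is read off from the order-$t^k$ component of the deformation equations \eqref{4.7} and \eqref{4.8}: with all intermediate terms vanishing, only the $i=0,j=k$ and $i=k,j=0$ summands survive, so \eqref{4.7} collapses to $\delta\nu_k=0$ while \eqref{4.8} collapses to $\delta d_k+\Phi\nu_k=0$; together these say $\partial(\nu_k,d_k)=(\delta\nu_k,\delta d_k+\Phi\nu_k)=0$. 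The case $k=1$ is precisely Proposition~\ref{prop:fddco}.

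Next comes the \emph{reduction step}. Given such a deformation, since $\mathcal{H}^3_{\mathrm{mDLts^{\lambda}}}(\mathfrak{L},\mathfrak{L})=0$ the cocycle $(\nu_k,d_k)$ is a coboundary, say $(\nu_k,d_k)=\partial(\psi)$ for some $\psi\in\mathrm{End}(\mathfrak{L})=\mathcal{C}^1_{\mathrm{mDLts^\lambda}}(\mathfrak{L},\mathfrak{L})$. Set $\varphi_t=\mathrm{id}_\mathfrak{L}-\psi\,t^k$ and define the equivalent deformation $\nu'_t=\varphi_t^{-1}\circ\nu_t\circ(\varphi_t\times\varphi_t\times\varphi_t)$ and $d'_t=\varphi_t^{-1}\circ d_t\circ\varphi_t$. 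Expanding to order $t^k$, using $\varphi_t^{-1}=\mathrm{id}_\mathfrak{L}+\psi\,t^k+O(t^{k+1})$ together with the adjoint-representation identities $\delta\psi(a,b,c)=[\psi(a),b,c]+[a,\psi(b),c]+[a,b,\psi(c)]-\psi([a,b,c])$ and $\Phi\psi(a)=\psi(d(a))-d(\psi(a))$, one finds $(\nu'_k,d'_k)=(\nu_k,d_k)-\partial(\psi)=0$, so $(\nu'_t,d'_t)=(\nu,d)+O(t^{k+1})$. Thus a deformation trivial through order $t^{k-1}$ is replaced by an equivalent one trivial through order $t^{k}$.

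Finally I would assemble the conclusion. Starting from the given deformation (trivial through order $t^0$) and applying the reduction step successively for $k=1,2,3,\dots$ produces formal isomorphisms $\varphi^{(k)}_t=\mathrm{id}_\mathfrak{L}-\psi_k t^k$, whose ordered composite $\varphi_t=\cdots\circ\varphi^{(2)}_t\circ\varphi^{(1)}_t$ is a well-defined element of $\mathrm{End}(\mathfrak{L})[[t]]$ because each factor is the identity modulo $t^k$, so every coefficient of $\varphi_t$ stabilizes after finitely many steps. This composite carries $(\nu_t,d_t)$ to $(\nu,d)$, showing the deformation is trivial and hence that $(\mathfrak{L},\nu,d)$ is rigid. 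The point needing the most care is the convergence/well-definedness of this infinite composition in the $t$-adic topology, along with checking that each intermediate pair $(\nu'_t,d'_t)$ is genuinely a deformation, i.e. still satisfies \eqref{4.1}--\eqref{4.4}; the cocycle and coboundary bookkeeping is routine once the leading-term lemma is in hand.
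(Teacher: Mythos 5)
Your proposal is correct, and its overall strategy --- use the vanishing of $\mathcal{H}^{3}_{\mathrm{mDLts^{\lambda}}}(\mathfrak{L},\mathfrak{L})$ to cancel the lowest-order term of the deformation by a formal isomorphism, then induct --- is the same as the paper's. The differences lie in where you are more careful, and they are genuine improvements rather than cosmetic ones. First, the paper proves only the $k=1$ case of your leading-term lemma (Proposition \ref{prop:fddco}) and then says ``by repeating the argument''; but after the first normalization the deformation has the form $(\nu,d)+(\nu'_2,d'_2)t^2+\cdots$, about which Proposition \ref{prop:fddco} as stated says nothing. Your lemma --- that when $\nu_i=d_i=0$ for $1\le i<k$, the order-$t^k$ parts of \eqref{4.7} and \eqref{4.8} collapse to $\delta\nu_k=0$ and $\delta d_k+\Phi\nu_k=0$ --- is exactly what the induction requires, so you have turned the paper's sketch into a complete argument. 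Second, your sign is the correct one: under $\varphi_t=\mathrm{id}_{\mathfrak{L}}+\psi t^{k}$ the transformed deformation satisfies $(\nu'_k,d'_k)=(\nu_k,d_k)+\partial(\psi)$ (this is the paper's own transformation formula for infinitesimals of equivalent deformations), so to cancel $(\nu_k,d_k)=\partial(\psi)$ one must take $\varphi_t=\mathrm{id}_{\mathfrak{L}}-\psi t^{k}$, as you do; the paper's choice $\varphi_t=\mathrm{id}_{\mathfrak{L}}+\varphi_1 t$ with $(\nu_1,d_1)=\partial(\varphi_1)$ actually yields $(\nu'_1,d'_1)=2\partial(\varphi_1)$ rather than $0$ --- a harmless slip, repaired by replacing $\varphi_1$ with $-\varphi_1$, but a slip nonetheless. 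Third, you address the $t$-adic well-definedness of the infinite composite of the isomorphisms $\varphi^{(k)}_t$, which the paper passes over in silence. In short: same route, but your write-up closes real gaps in the paper's own proof.
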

\begin{proof}
Let $(\mathfrak{L}[[t]],\nu_t,d_t)$ be a $1$-parameter formal deformation of $(\mathfrak{L}, \nu,d)$. By Proposition ~\ref{prop:fddco},   $(\nu_1,d_1)$ is a 3-cocycle. By $\mathcal{H}_{\mathrm{mDLts}^{\lambda}}^3(\mathfrak{L},\mathfrak{L})=0$, there exists a 1-cochain $\varphi_1\in  \mathcal{C}^1_{\mathrm{mDLts^\lambda}}(\mathfrak{L},\mathfrak{L})$ such that
\begin{eqnarray}
(\nu_1,d_1)=\partial(\varphi_1).\label{4.11}
\end{eqnarray}
Then setting $\varphi_t=\mathrm{id}_\mathfrak{L}+\varphi_1 t$, we have a deformation $(\mathfrak{L}[[t]], \nu'_t, d'_t)$, where
\begin{eqnarray*}
\nu'_t &=&\varphi_t^{-1}\circ\nu_t\circ(\varphi_t\otimes\varphi_t\otimes\varphi_t),\\
d'_t &=&\varphi_t^{-1}\circ d_t\circ\varphi_t.
\end{eqnarray*}
Thus, $(\mathfrak{L}[[t]],\nu'_t, d'_t)$ is equivalent to $(\mathfrak{L}[[t]],\nu_t,d_t)$. Moreover, we have
\begin{align*}
\nu'_t=&(\mathrm{id}_L-\varphi_1t+\varphi_1^2t^{2}+\cdots+(-1)^i\varphi_1^it^{i}+\cdots)\circ\nu_t\circ\big((\mathrm{id}_L+\varphi_1 t)\otimes(\mathrm{id}_L+\varphi_1 t)\otimes(\mathrm{id}_L+\varphi_1 t)\big),\\
d'_t=&(\mathrm{id}_L-\varphi_1t+\varphi_1^2t^{2}+\cdots+(-1)^i\varphi_1^it^{i}+\cdots)\circ d_t\circ(\mathrm{id}_L+\varphi_1 t).
\end{align*}
By Eq.~\eqref{4.11}, we have
\begin{eqnarray*}
\nu'_t&=&\nu+ \nu'_{2} t^{2}+\cdots,\\
d'_t&=&d + d'_{2} t^{2}+\cdots.
\end{eqnarray*}
Then by repeating the argument, we can show that $(\mathfrak{L}[[t]],\nu_t,d_t)$ is equivalent to $(\mathfrak{L}[[t]],\nu,d)$. Therefore, $(\mathfrak{L}, \nu,d)$ is rigid.
\end{proof}

\section{Abelian extensions of modified $\lambda$-differential Lie triple systems} \label{sec:ext}
\def\theequation{\arabic{section}.\arabic{equation}}
\setcounter{equation} {0}
In this section, we study abelian extensions of modified $\lambda$-differential Lie triple systems and prove that they are classified by the third cohomology.

\begin{defn}
Let $(\mathfrak{L}, [\cdot,\cdot,\cdot], d)$ and $(\mathfrak{V},[\cdot,\cdot,\cdot]_\mathfrak{V}, d_\mathfrak{V})$ be two modified $\lambda$-differential Lie triple systems.
An abelian extension of $(\mathfrak{L}, [\cdot,\cdot,\cdot], d)$ by  $(\mathfrak{V},[\cdot,\cdot,\cdot]_\mathfrak{V}, d_\mathfrak{V})$
 is a short exact sequence of homomorphisms of modified $\lambda$-differential Lie triple systems
$$\begin{CD}
0@>>> {\mathfrak{V}} @>i >> \hat{\mathfrak{L}} @>p >> \mathfrak{L} @>>>0\\
@. @V {d_\mathfrak{V}} VV @V \hat{d} VV @V d VV @.\\
0@>>> {\mathfrak{V}} @>i >> \hat{\mathfrak{L}} @>p >> \mathfrak{L} @>>>0
\end{CD}$$
such that $[u, v, \cdot]_{\hat{\mathfrak{L}}}=[u, \cdot, v]_{\hat{\mathfrak{L}}}=[\cdot, u, v]_{\hat{\mathfrak{L}}}=0$, for all $u,v\in \mathfrak{V}$,  i.e., $\mathfrak{V}$ is an abelian ideal of $\hat{\mathfrak{L}}.$
\end{defn}

\begin{defn}
Let $(\hat{\mathfrak{L}}_1, [\cdot,\cdot,\cdot]_{\hat {\mathfrak{L}}_1}, \hat {d_1})$ and $(\hat{\mathfrak{L}}_2, [\cdot,\cdot,\cdot]_{\hat {\mathfrak{L}}_2}, \hat {d_2})$ be two abelian extensions of $(\mathfrak{L}, [\cdot,\cdot,\cdot], d)$ by $(\mathfrak{V},[\cdot,\cdot,\cdot]_\mathfrak{V}, d_\mathfrak{V})$. They are said to be  equivalent if  there is an isomorphism of modified $\lambda$-differential Lie triple systems $\zeta:(\hat{\mathfrak{L}}_1, [\cdot,\cdot,\cdot]_{\hat {\mathfrak{L}}_1}, \hat {d_1})\rightarrow (\hat{\mathfrak{L}}_2, [\cdot,\cdot,\cdot]_{\hat {\mathfrak{L}}_2}, \hat {d_2})$
such that the following diagram is  commutative:
$$\begin{CD}
0@>>> {(\mathfrak{V},d_\mathfrak{V})} @>i >> (\hat{\mathfrak{L}}_1,   \hat {d_1}) @>p >> (\mathfrak{L},d) @>>>0\\
@. @| @V \zeta VV @| @.\\
0@>>> {(\mathfrak{V},d_\mathfrak{V})} @>i >> (\hat{\mathfrak{L}}_2,   \hat {d_2}) @>p >> (\mathfrak{L},d) @>>>0.
\end{CD}$$
\end{defn}

A   section  of an abelian extension $(\hat{\mathfrak{L}}, [\cdot,\cdot,\cdot]_{\hat {\mathfrak{L}}}, \hat {d})$ of $(\mathfrak{L}, [\cdot,\cdot,\cdot], d)$ by  $(\mathfrak{V},[\cdot,\cdot,\cdot]_\mathfrak{V}, d_\mathfrak{V})$ is a linear map $\sigma:\mathfrak{L}\rightarrow \hat{\mathfrak{L}}$ such that $p\circ \sigma=\mathrm{id}_\mathfrak{L}$.

Now for an abelian extension $(\hat{\mathfrak{L}}, [\cdot,\cdot,\cdot]_{\hat {\mathfrak{L}}}, \hat {d})$ of $(\mathfrak{L}, [\cdot,\cdot,\cdot], d)$ by  $(\mathfrak{V},[\cdot,\cdot,\cdot]_\mathfrak{V}, d_\mathfrak{V})$ with a section $\sigma:\mathfrak{L}\rightarrow\hat{\mathfrak{L}}$, we define linear map $\vartheta: \mathfrak{L}\times \mathfrak{L}\rightarrow \mathrm{End}(\mathfrak{V})$  by
$$\vartheta(a,b)u:=[u,\sigma(a),\sigma(b)]_{\hat{\mathfrak{L}}}, \quad \forall a,b\in \mathfrak{L}, u\in \mathfrak{V}.$$
In particular, $\mathcal{D}(a,b)u=[\sigma(a),\sigma(b),u]_{\hat{\mathfrak{L}}}=\vartheta(b,a)u-\vartheta(a,b)u.$
\begin{prop}
  With the above notations, $(\mathfrak{V},\vartheta, d_\mathfrak{V})$ is a representation over the modified $\lambda$-differential Lie triple systems  $(\mathfrak{L}, [\cdot,\cdot,\cdot], d)$.
\end{prop}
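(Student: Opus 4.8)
The plan is to verify the two defining conditions of a representation of a modified $\lambda$-differential Lie triple system for the triple $(\mathfrak{V}, \vartheta, d_\mathfrak{V})$: first that $(\mathfrak{V}, \vartheta)$ is a representation of the Lie triple system $(\mathfrak{L}, [\cdot,\cdot,\cdot])$, i.e. Eqs.~\eqref{2.4} and \eqref{2.5}, and then the compatibility Eq.~\eqref{2.7} with $d_\mathfrak{V}$. Throughout I would use the vector space decomposition $\hat{\mathfrak{L}} = \sigma(\mathfrak{L}) \oplus \mathfrak{V}$ coming from $p \circ \sigma = \mathrm{id}_\mathfrak{L}$ and $\mathfrak{V} = \ker p$, together with two consequences of the hypotheses. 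Since $\mathfrak{V}$ is an abelian ideal, any bracket $[\cdot,\cdot,\cdot]_{\hat{\mathfrak{L}}}$ having two entries in $\mathfrak{V}$ vanishes; in particular, whenever a computation already carries a factor $u \in \mathfrak{V}$ in one slot, I may replace $\sigma([a,b,c])$ by $[\sigma(a), \sigma(b), \sigma(c)]_{\hat{\mathfrak{L}}}$ freely, since their difference lies in $\mathfrak{V}$ and contributes a bracket with two $\mathfrak{V}$-entries. Moreover, the commutativity of the extension diagram gives $\hat{d} \circ i = i \circ d_\mathfrak{V}$ and $p \circ \hat{d} = d \circ p$; identifying $\mathfrak{V}$ with $i(\mathfrak{V})$, the former says $\hat{d}|_\mathfrak{V} = d_\mathfrak{V}$, and the latter yields $\tau(a) := \hat{d}(\sigma(a)) - \sigma(d(a)) \in \mathfrak{V}$ for all $a \in \mathfrak{L}$.

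For the Lie triple system part, I would establish \eqref{2.4} and \eqref{2.5} by feeding the elements $u, \sigma(a), \sigma(b), \sigma(x), \sigma(y)$ into the fundamental identity \eqref{2.3} of $\hat{\mathfrak{L}}$ and using \eqref{2.1}, \eqref{2.2} to match slots, together with the abelian replacement above to turn every occurrence of $\sigma([\cdot,\cdot,\cdot])$ into the corresponding $\hat{\mathfrak{L}}$-bracket of sections. Since $\vartheta(a,b)u = [u, \sigma(a), \sigma(b)]_{\hat{\mathfrak{L}}}$ and $\mathcal{D}(a,b)u = [\sigma(a), \sigma(b), u]_{\hat{\mathfrak{L}}}$, each term of \eqref{2.4} and \eqref{2.5} becomes an iterated $\hat{\mathfrak{L}}$-bracket, and the identities reduce to \eqref{2.3}. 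This step is the standard construction of the representation induced by an abelian ideal and parallels the computation cited from \cite{Sheng} earlier in this section, so I would present it as routine bookkeeping.

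The genuinely new ingredient is \eqref{2.7}. For $x, y \in \mathfrak{L}$ and $u \in \mathfrak{V}$, I would start from
\[
d_\mathfrak{V}(\vartheta(x,y)u) = \hat{d}\big([u, \sigma(x), \sigma(y)]_{\hat{\mathfrak{L}}}\big),
\]
using $\vartheta(x,y)u \in \mathfrak{V}$ and $\hat{d}|_\mathfrak{V} = d_\mathfrak{V}$. Expanding the right-hand side by the modified $\lambda$-differential rule \eqref{2.6} for $\hat{d}$ gives four terms. In the first, $\hat{d}(u) = d_\mathfrak{V}(u)$ produces $\vartheta(x,y)d_\mathfrak{V}(u)$. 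In the middle two I substitute $\hat{d}(\sigma(x)) = \sigma(d(x)) + \tau(x)$ and $\hat{d}(\sigma(y)) = \sigma(d(y)) + \tau(y)$: the $\sigma(d(\cdot))$ parts yield $\vartheta(d(x),y)u$ and $\vartheta(x,d(y))u$, while the $\tau$-parts give $[u, \tau(x), \sigma(y)]_{\hat{\mathfrak{L}}}$ and $[u, \sigma(x), \tau(y)]_{\hat{\mathfrak{L}}}$, which vanish since $u, \tau(x), \tau(y) \in \mathfrak{V}$ and $\mathfrak{V}$ is abelian. The last term is $\lambda[u, \sigma(x), \sigma(y)]_{\hat{\mathfrak{L}}} = \lambda\,\vartheta(x,y)u$. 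Summing recovers exactly \eqref{2.7}.

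The main obstacle is not a single hard estimate but rather the slot-bookkeeping: one must confirm that every correction term — each arising from the failure of $\sigma$ to be a homomorphism in Step~1, or to intertwine $d$ and $\hat{d}$ in Step~2 — genuinely carries two $\mathfrak{V}$-entries and is therefore annihilated by the abelian hypothesis. Once that vanishing is secured, both \eqref{2.4}--\eqref{2.5} and \eqref{2.7} follow directly from the Lie triple system axioms and the differential rule for $\hat{d}$, which proves the proposition.
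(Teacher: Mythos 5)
Your proposal is correct and follows essentially the same route as the paper's proof: the same decomposition via the section $\sigma$, the same key observation that the defects $\sigma([a,b,c])-[\sigma(a),\sigma(b),\sigma(c)]_{\hat{\mathfrak{L}}}$ and $\hat{d}(\sigma(a))-\sigma(d(a))$ lie in the abelian ideal $\mathfrak{V}$ and hence vanish inside any bracket already carrying a $\mathfrak{V}$-entry, the same reduction of Eqs.~\eqref{2.4}--\eqref{2.5} to the fundamental identity \eqref{2.3} in $\hat{\mathfrak{L}}$, and the same expansion of $\hat{d}$ on $[u,\sigma(x),\sigma(y)]_{\hat{\mathfrak{L}}}$ via \eqref{2.6} to obtain \eqref{2.7}. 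The only cosmetic difference is that you name the defect $\tau(a)=\hat{d}(\sigma(a))-\sigma(d(a))$ explicitly (the paper's $\varpi$) and substitute it forward, whereas the paper performs the equivalent replacement in the opposite direction.
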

\begin{proof}
First, for any  $x,y,a,b\in \mathfrak{L}$ and $u\in \mathfrak{V}$, From $\sigma([y,a,b])-[\sigma(y),\sigma(a),\sigma(b)]_{\hat{\mathfrak{L}}}\in \mathfrak{V}\cong \mathrm{ker}(p)$, we can get
 $[u,\sigma(x),\sigma([y,a,b])]_{\hat{\mathfrak{L}}}=[u,\sigma(x),[\sigma(y),\sigma(a),\sigma(b)]_{\hat{\mathfrak{L}}}]_{\hat{\mathfrak{L}}}$.
 Furthermore, by Eqs. \eqref{2.1} and  \eqref{2.3},   we obtain
\begin{align*}
& \vartheta(a,b)\vartheta(x,y)u-\vartheta(y,b)\vartheta(x,a)u-\vartheta(x, [y,a,b])u+\mathcal{D}(y,a)\vartheta(x,b)u\\
=&[[u,\sigma(x),\sigma(y)]_{\hat{\mathfrak{L}}},\sigma(a),\sigma(b)]_{\hat{\mathfrak{L}}}-[[u,\sigma(x),\sigma(a)]_{\hat{\mathfrak{L}}},\sigma(y),\sigma(b)]_{\hat{\mathfrak{L}}}-[u,\sigma(x),\sigma([y,a,b])]_{\hat{\mathfrak{L}}}\\
&+[\sigma(y),\sigma(a),[u,\sigma(x),\sigma(b)]_{\hat{\mathfrak{L}}}]_{\hat{\mathfrak{L}}}\\
=&[[u,\sigma(x),\sigma(y)]_{\hat{\mathfrak{L}}},\sigma(a),\sigma(b)]_{\hat{\mathfrak{L}}}+[\sigma(y),[u,\sigma(x),\sigma(a)]_{\hat{\mathfrak{L}}},\sigma(b)]_{\hat{\mathfrak{L}}}-[u,\sigma(x),[\sigma(y),\sigma(a),\sigma(b)]_{\hat{\mathfrak{L}}})]_{\hat{\mathfrak{L}}}\\
&+[\sigma(y),\sigma(a),[u,\sigma(x),\sigma(b)]_{\hat{\mathfrak{L}}}]_{\hat{\mathfrak{L}}}\\
=&0,\\
& \vartheta(a,b)\mathcal{D}(x,y)u-\mathcal{D}(x,y)\vartheta(a,b)u+\vartheta([x,y,a],b)u+\vartheta(a,[x,y,b])u\\
=&[[\sigma(x),\sigma(y),u]_{\hat{\mathfrak{L}}},\sigma(a),\sigma(b)]_{\hat{\mathfrak{L}}}-[\sigma(x),\sigma(y),[u,\sigma(a),\sigma(b)]_{\hat{\mathfrak{L}}}]_{\hat{\mathfrak{L}}}+[u,\sigma([x,y,a]),\sigma(b)]_{\hat{\mathfrak{L}}}\\
&+[u,\sigma(a),\sigma([x,y,b])]_{\hat{\mathfrak{L}}}\\
=&[[\sigma(x),\sigma(y),u]_{\hat{\mathfrak{L}}},\sigma(a),\sigma(b)]_{\hat{\mathfrak{L}}}-[\sigma(x),\sigma(y),[u,\sigma(a),\sigma(b)]_{\hat{\mathfrak{L}}}]_{\hat{\mathfrak{L}}}+[u,[\sigma(x),\sigma(y),\sigma(a)]_{\hat{\mathfrak{L}}},\sigma(b)]_{\hat{\mathfrak{L}}}\\
&+[u,\sigma(a),[\sigma(x),\sigma(y),\sigma(b)]_{\hat{\mathfrak{L}}}]_{\hat{\mathfrak{L}}}\\
=&0.
\end{align*}

In addition,  $\hat{d}(\sigma(x))-\sigma(d(x))\in \mathfrak{V}\cong \mathrm{ker}(p),$ means that  $[u,\sigma(d(x)),\sigma(y)]_{\hat{\mathfrak{L}}}= [u,\hat{d}(\sigma(x)),\sigma(y)]_{\hat{\mathfrak{L}}}$. Thus, we have
\begin{align*}
&d_\mathfrak{V}(\vartheta(x,y)u)=d_\mathfrak{V}([u,\sigma(x),\sigma(y)]_{\hat{\mathfrak{L}}})\\
=&[d_\mathfrak{V}(u),\sigma(x),\sigma(y)]_{\hat{\mathfrak{L}}}+[u,\hat{d}(\sigma(x)),\sigma(y)]_{\hat{\mathfrak{L}}}+[u,\sigma(x),\hat{d}(\sigma(y))]_{\hat{\mathfrak{L}}}+\lambda[u,\sigma(x),\sigma(y)]_{\hat{\mathfrak{L}}}\\
=&[d_\mathfrak{V}(u),\sigma(x),\sigma(y)]_{\hat{\mathfrak{L}}}+[u,\sigma(d(x)),\sigma(y)]_{\hat{\mathfrak{L}}}+[u,\sigma(x),\sigma(d(y)))]_{\hat{\mathfrak{L}}}+\lambda[u,\sigma(x),\sigma(y)]_{\hat{\mathfrak{L}}}\\
=&\vartheta(x,y)d_\mathfrak{V}(u)+\vartheta(d(x),y)u+\vartheta(x,d(y))u+\lambda\vartheta(x,y)u,
\end{align*}
Hence, $(\mathfrak{V},\vartheta, d_\mathfrak{V})$ is a representation over  $(\mathfrak{L}, [\cdot,\cdot,\cdot], d)$.
\end{proof}

We  further  define linear maps $\varsigma:\mathfrak{L}\times \mathfrak{L}\times \mathfrak{L}\rightarrow \mathfrak{V}$ and $\varpi:\mathfrak{L}\rightarrow \mathfrak{V}$ respectively by
\begin{align*}
\varsigma(a,b,c)&=[\sigma(a), \sigma(b),\sigma(c)]_{\hat{\mathfrak{L}}}-\sigma([a, b,c]),\\
\varpi(a)&=\hat{d}(\sigma(a))-\sigma(d(a)),\quad\forall a,b,c\in \mathfrak{L}.
\end{align*}
We transfer the modified $\lambda$-differential Lie triple system structure on $\hat{\mathfrak{L}}$ to $\mathfrak{L}\oplus \mathfrak{V}$ by endowing $\mathfrak{L}\oplus \mathfrak{V}$ with a multiplication $[\cdot, \cdot,\cdot]_\varsigma$ and the modified $\lambda$-differential
operator  $d_\varpi$  defined by
\begin{align}
[a+u, b+v,c+w]_\varsigma&=[a, b, c]+\vartheta(b,c)u-\vartheta(a,c)v+\mathcal{D}(a,b)w+\varsigma(a,b,c), \label{5.1}\\
d_\varpi(a+u)&=d(a)+\varpi(a)+d_\mathfrak{V}(u),\,\forall a,b,c\in \mathfrak{L},\,u,v,w\in \mathfrak{V}.\label{5.2}
\end{align}

\begin{prop}\label{prop:3-cocycle}
The triple $(\mathfrak{L}\oplus \mathfrak{V},[\cdot, \cdot,\cdot]_\varsigma,d_\varpi)$ is a modified $\lambda$-differential Lie triple system  if and only if
$(\varsigma,\varpi)$ is a 3-cocycle  of the modified $\lambda$-differential Lie triple system $(\mathfrak{L},[\cdot,\cdot,\cdot],d)$ with the coefficient  in $(\mathfrak{V}; \theta, d_\mathfrak{V})$.
\end{prop}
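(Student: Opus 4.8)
The plan is to check directly that the transferred operations $[\cdot,\cdot,\cdot]_\varsigma$ of \eqref{5.1} and $d_\varpi$ of \eqref{5.2} satisfy the four defining identities \eqref{2.1}, \eqref{2.2}, \eqref{2.3} and \eqref{2.6} of a modified $\lambda$-differential Lie triple system, and to read off from this verification exactly which identities produce nontrivial constraints on the pair $(\varsigma,\varpi)$. The organizing principle is to substitute elements $a_i+u_i\in\mathfrak{L}\oplus\mathfrak{V}$ into each identity and split the resulting equation into its $\mathfrak{L}$-component and its $\mathfrak{V}$-component, then further sort the $\mathfrak{V}$-component according to how many slots carry a genuine $\mathfrak{V}$-entry. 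Since $\mathfrak{V}$ is an abelian ideal, every term involving two or more $\mathfrak{V}$-entries vanishes, so only the all-$\mathfrak{L}$ and the single-$\mathfrak{V}$-entry contributions survive.

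First I would dispose of \eqref{2.1} and \eqref{2.2}: their $\mathfrak{L}$-parts hold because $\mathfrak{L}$ is a Lie triple system, while their $\mathfrak{V}$-parts hold automatically, since $\varsigma$ is built from the bracket of $\hat{\mathfrak{L}}$ and therefore already lies in $\mathcal{C}^3_{\mathrm{Lts}}(\mathfrak{L},\mathfrak{V})$, and the $\vartheta,\mathcal{D}$ terms carry the matching symmetries; thus these two identities impose no condition. Next I would expand \eqref{2.3}. On all-$\mathfrak{L}$ arguments $a,b,x,y,z$, the $\mathfrak{L}$-component is precisely \eqref{2.3} for $\mathfrak{L}$, and computing the $\mathfrak{V}$-component via \eqref{5.1} (each nested bracket contributes one $\vartheta$- or $\mathcal{D}$-term from the inner $\varsigma$ value together with an outer $\varsigma$-term) collapses exactly to the first 3-cocycle equation $\delta\varsigma=0$. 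When exactly one slot carries a $\mathfrak{V}$-entry, the $\varsigma$-terms drop out and what remains is the pair of representation identities \eqref{2.4} and \eqref{2.5} for $(\mathfrak{V};\vartheta)$, which already hold by the proposition just proved; hence these mixed cases give nothing new.

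Finally I would treat \eqref{2.6} for $d_\varpi$. On all-$\mathfrak{L}$ inputs the $\mathfrak{L}$-component is \eqref{2.6} for $(\mathfrak{L},d)$, and the $\mathfrak{V}$-component, expanded through \eqref{5.1} and \eqref{5.2}, rearranges into the second 3-cocycle equation $\delta\varpi+\Phi\varsigma=0$; on a single-$\mathfrak{V}$-entry input it reduces to the compatibility \eqref{2.7} of $d_\mathfrak{V}$ with $\vartheta$, which again holds by the proposition just proved. Collecting the two surviving constraints gives precisely $\partial(\varsigma,\varpi)=(\delta\varsigma,\delta\varpi+\Phi\varsigma)=0$, i.e. $(\varsigma,\varpi)$ is a 3-cocycle, and since each step above is reversible one obtains the claimed equivalence. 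The main obstacle is purely computational: the fundamental identity \eqref{2.3} is quinary, so its all-$\mathfrak{L}$ expansion produces many cross terms, and the delicate point is to confirm that every $\vartheta$- and $\mathcal{D}$-term lands with the correct sign and in the correct position so that the $\mathfrak{V}$-component reassembles verbatim into the Yamaguti expression $\delta\varsigma$; keeping the bookkeeping aligned with the explicit 3-cocycle formulas recorded at the end of Section \ref{sec:cohomologydo} is what makes this tractable.
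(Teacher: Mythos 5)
Your proposal is correct and follows essentially the same route as the paper: expand the defining identities \eqref{2.1}--\eqref{2.3} and \eqref{2.6} for $([\cdot,\cdot,\cdot]_\varsigma, d_\varpi)$ on $\mathfrak{L}\oplus\mathfrak{V}$, observe that only the all-$\mathfrak{L}$ contributions impose conditions on $(\varsigma,\varpi)$, and identify those conditions with $\delta\varsigma=0$ and $\delta\varpi+\Phi\varsigma=0$, i.e. $\partial(\varsigma,\varpi)=0$. Your treatment is in fact slightly more explicit than the paper's, which records the resulting constraints (its Eqs. \eqref{5.3}--\eqref{5.4}) without spelling out that the mixed single-$\mathfrak{V}$-entry cases reduce to the representation identities already established for $(\mathfrak{V};\vartheta,d_\mathfrak{V})$.
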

\begin{proof}
The triple $(\mathfrak{L}\oplus \mathfrak{V},[\cdot, \cdot,\cdot]_\varsigma,d_\varpi)$ is a modified $\lambda$-differential Lie triple system if and only if
\begin{align}
&\varsigma(a,b,c)+\varsigma(b,a,c)=0,\nonumber\\
&\varsigma(a,b,c)+\varsigma(c,a,b)+\varsigma(b,c,a)=0,\nonumber\\
&\varsigma(a,b,[x,y,z])+\mathcal{D}(a,b)\varsigma(x,y,z)-\varsigma([a,b,x],y,z)-\varsigma(x,[a,b,y],z)-\varsigma(x,y,[a,b,z])\nonumber\\
 &-\vartheta(y,z)\varsigma(a,b,x)+\vartheta(x,z)\varsigma(a,b,y)-\mathcal{D}(x,y)\varsigma(a,b,z)=0,\label{5.3}\\
&\vartheta(b,c)\varpi(a)+\varsigma(d(a),b,c)-\vartheta(a,c)\varpi(b)+\varsigma(a,d(b),c)+\mathcal{D}(a,b)\varpi(c)+\varsigma(a,b,d(c))\nonumber\\
&+\lambda\varsigma(a,b,c)-\varpi([a,b,c])-d_\mathfrak{V}(\varsigma(a,b,c))=0,\label{5.4}
\end{align}
for any $a,b,c,x,y,z\in \mathfrak{L}$.
Using Eqs. \eqref{5.3} and  \eqref{5.4}, we get $\delta\varsigma=0$ and $\delta\varpi+\Phi\varsigma=0$, respectively.
Therefore, $\partial(\varsigma,\varpi)=(\delta\varsigma,\delta\varpi+\Phi\varsigma)=0,$ that is, $(\varsigma,\varpi)$ is a  3-cocycle.

Conversely, if $(\varsigma,\varpi)$ satisfying Eqs.~\eqref{5.3} and \eqref{5.4}, one can easily check that $(\mathfrak{L}\oplus \mathfrak{V},[\cdot, \cdot,\cdot]_\varsigma,d_\varpi)$ is a modified $\lambda$-differential Lie triple system.
\end{proof}

Next we are ready to classify abelian extensions of a modified $\lambda$-differential Lie triple system.

\begin{theorem}
Abelian extensions of a modified $\lambda$-differential Lie triple system $(\mathfrak{L}, [\cdot, \cdot, \cdot], d)$ by $(\mathfrak{V},[\cdot, \cdot, \cdot]_{\mathfrak{V}}, d_\mathfrak{V})$ are classified by the third cohomology group $\mathcal{H}_{\mathrm{mDLts^{\lambda}}}^3(\mathfrak{L},\mathfrak{V})$ of $(\mathfrak{L}, [\cdot, \cdot, \cdot], d)$ with coefficients in the representation $(\mathfrak{V}; \vartheta, d_\mathfrak{V})$.
\end{theorem}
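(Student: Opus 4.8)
The plan is to establish a bijection between the set of equivalence classes of abelian extensions of $(\mathfrak{L},[\cdot,\cdot,\cdot],d)$ by $(\mathfrak{V},[\cdot,\cdot,\cdot]_\mathfrak{V},d_\mathfrak{V})$ and the third cohomology group $\mathcal{H}_{\mathrm{mDLts^{\lambda}}}^3(\mathfrak{L},\mathfrak{V})$, built from the canonical representation $(\mathfrak{V};\vartheta,d_\mathfrak{V})$ and the pair $(\varsigma,\varpi)$ attached to a section, which is a $3$-cocycle by Proposition~\ref{prop:3-cocycle}.

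First I would show that the assignment sending an extension to the class of $(\varsigma,\varpi)$ is independent of the chosen section $\sigma$. Given two sections $\sigma_1,\sigma_2$, their difference $\phi:=\sigma_1-\sigma_2$ lands in $\ker p\cong\mathfrak{V}$ since $p\circ\sigma_i=\mathrm{id}_\mathfrak{L}$. A preliminary observation is that $\vartheta$ is canonical: replacing $\sigma$ by $\sigma+\phi$ only introduces bracket terms with at least two arguments in $\mathfrak{V}$, and these vanish because $\mathfrak{V}$ is an abelian ideal. Expanding $[\sigma_1(a),\sigma_1(b),\sigma_1(c)]_{\hat{\mathfrak{L}}}$ and $\hat{d}(\sigma_1(a))$, discarding every term with two or more $\mathfrak{V}$-entries, and using the anti-symmetry \eqref{2.1} together with $\hat d|_\mathfrak{V}=d_\mathfrak{V}$, I expect to obtain precisely
$$\varsigma_1-\varsigma_2=\delta\phi,\qquad \varpi_1-\varpi_2=-\Phi\phi,$$
so that $(\varsigma_1,\varpi_1)-(\varsigma_2,\varpi_2)=(\delta\phi,-\Phi\phi)=\partial(\phi)$ is a coboundary and the class in $\mathcal{H}^3$ is well defined. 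A parallel, easier argument handles equivalence: an equivalence $\zeta$ restricts to $\mathrm{id}_\mathfrak{V}$ and satisfies $p_2\circ\zeta=p_1$, so $\zeta\circ\sigma_1$ is a section of the second extension and the resulting cocycles coincide; hence equivalent extensions yield the same class.

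Conversely, for surjectivity I would start from a $3$-cocycle $(\varsigma,\varpi)$ and construct the modified $\lambda$-differential Lie triple system $(\mathfrak{L}\oplus\mathfrak{V},[\cdot,\cdot,\cdot]_\varsigma,d_\varpi)$ via formulas \eqref{5.1}--\eqref{5.2}; Proposition~\ref{prop:3-cocycle} guarantees this is a genuine modified $\lambda$-differential Lie triple system, and the inclusion $\mathfrak{V}\hookrightarrow\mathfrak{L}\oplus\mathfrak{V}$ together with the projection onto $\mathfrak{L}$ exhibit it as an abelian extension whose attached class is $[(\varsigma,\varpi)]$. For injectivity I would take two cohomologous cocycles with $(\varsigma_1,\varpi_1)-(\varsigma_2,\varpi_2)=\partial(\phi)$ and define $\zeta:\mathfrak{L}\oplus\mathfrak{V}\to\mathfrak{L}\oplus\mathfrak{V}$ by $\zeta(a+u)=a+u-\phi(a)$; it fixes $\mathfrak{V}$ and covers $\mathrm{id}_\mathfrak{L}$, and a direct check using $\varsigma_1-\varsigma_2=\delta\phi$ and $\varpi_1-\varpi_2=-\Phi\phi$ shows it intertwines $[\cdot,\cdot,\cdot]_{\varsigma_2}$ with $[\cdot,\cdot,\cdot]_{\varsigma_1}$ and $d_{\varpi_2}$ with $d_{\varpi_1}$, hence is an equivalence.

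The main obstacle is the section-independence computation. One must carefully expand the ternary brackets of the sums $\sigma_2+\phi$, justify that every summand containing at least two elements of $\mathfrak{V}$ drops out because $\mathfrak{V}$ is abelian, and then recognize the surviving single-$\phi$ terms, after applying \eqref{2.1}, as exactly $\vartheta(b,c)\phi(a)-\vartheta(a,c)\phi(b)+\mathcal{D}(a,b)\phi(c)-\phi([a,b,c])=\delta\phi(a,b,c)$, with the analogous identification $\varpi_1-\varpi_2=d_\mathfrak{V}\circ\phi-\phi\circ d=-\Phi\phi$ for the differential part. Once these two identities are secured, well-definedness, injectivity, and the inverse construction all follow formally, and the classification reduces to matching coboundaries with the freedom in the choice of $\phi$.
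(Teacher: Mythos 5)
Your proposal is correct and follows essentially the same route as the paper: extract a $3$-cocycle $(\varsigma,\varpi)$ from a section via Proposition~\ref{prop:3-cocycle}, show the class is independent of the section and of the equivalence class of the extension by exhibiting the difference as $\partial(\xi)$, and conversely build extensions from cocycles via Eqs.~\eqref{5.1}--\eqref{5.2}, with cohomologous cocycles linked by the shift map $a+u\mapsto a+u\pm\phi(a)$ (your sign convention is the mirror image of the paper's $\zeta_\xi(a,u)=a+\xi(a)+u$, but both are consistent). Your explicit remark that $\vartheta$ is canonical because terms with two $\mathfrak{V}$-entries vanish is a point the paper uses only implicitly, but it does not change the argument.
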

\begin{proof}
Let $(\hat{ \mathfrak{L}}, [\cdot,\cdot,\cdot]_{\hat{ \mathfrak{L}}}, d_{\hat{ \mathfrak{L}}})$ be an abelian extension of $(\mathfrak{L}, [\cdot, \cdot, \cdot], d)$ by $(\mathfrak{V},[\cdot, \cdot, \cdot]_{\mathfrak{V}}, d_\mathfrak{V})$.  We choose a section $\sigma: \mathfrak{L}\rightarrow \hat{ \mathfrak{L}}$ to obtain a 3-cocycle $(\varsigma,\varpi)$  by Proposition~\ref{prop:3-cocycle}.
First, we show that the cohomological class of $(\varsigma,\varpi)$  is independent of the choice of $\sigma$.
Let $\sigma_1,\sigma_2:\mathfrak{L}\rightarrow \hat{ \mathfrak{L}}$ be two distinct sections providing 3-cocycles $(\varsigma_1,\varpi_1)$ and $(\varsigma_2,\varpi_2)$ respectively. Define
linear map $\xi: \mathfrak{L}\rightarrow \mathfrak{V}$ by $\xi(a)=\sigma_1(a)-\sigma_2(a)$. Then
\begin{align*}
&\varsigma_1(a,b,c)\\
&=[\sigma_1(a), \sigma_1(b),\sigma_1(c)]_{\hat{ \mathfrak{L}}_1}-\sigma_1([a, b, c])\\
&=[\sigma_2(a)+\xi(a), \sigma_2(b)+\xi(b), \sigma_2(c)+\xi(c)]_{\hat{ \mathfrak{L}}_1}-(\sigma_2([a, b,c])+\xi([a, b,c]))\\
&=[\sigma_2(a), \sigma_2(b), \sigma_2(c)]_{\hat{ \mathfrak{L}}_2}+\theta(b,c)\xi(a)-\theta(a,c)\xi(b)+D(a,b)\xi(c)-\sigma_2([a, b,c])-\xi([a, b,c])\\
&=([\sigma_2(a), \sigma_2(b), \sigma_2(c)]_{\hat{ \mathfrak{L}}_2}-\sigma_2([a, b,c]))+\theta(b,c)\xi(a)-\theta(a,c)\xi(b)+D(a,b)\xi(c)-\xi([a, b,c])\\
&=\varsigma_2(a,b,c)+\delta\xi(a,b,c)
\end{align*}
and
\begin{align*}
\varpi_1(x)&=\hat {d}(\sigma_1(a))-\sigma_1(d(a))\\
&=\hat {d}(\sigma_2(a)+\xi(a))-\big(\sigma_2(d(a))+\xi(d(a))\big)\\
&=\big(\hat {d}(\sigma_2(a))-\sigma_2(d(a))\big)+\hat {d}(\xi(a))-\xi(d(a))\\
&=\varpi_2(a)+d_\mathfrak{V}(\xi(a))-\xi(d(a))\\
&=\xi_2(x)-\Phi\xi(a).
\end{align*}
i.e., $(\varsigma_1,\varpi_1)-(\varsigma_2,\varpi_2)=(\delta\xi,-\Phi\xi)=\partial(\xi)\in \mathcal{C}_{\mathrm{mDLts^{\lambda}}}^3(\mathfrak{L},\mathfrak{V})$. So $(\varsigma_1,\varpi_1)$ and $(\varsigma_2,\varpi_2)$ are in the same cohomological class  in $\mathcal{H}_{\mathrm{mDLts^{\lambda}}}^3(\mathfrak{L},\mathfrak{V})$.

Next, assume that $(\hat{\mathfrak{L}}_1, [\cdot,\cdot,\cdot]_{\hat {\mathfrak{L}}_1}, \hat {d_1})$ and $(\hat{\mathfrak{L}}_2, [\cdot,\cdot,\cdot]_{\hat {\mathfrak{L}}_2}, \hat {d_2})$  are two equivalent abelian extensions of $(\mathfrak{L},$
$ [\cdot,\cdot,\cdot], d)$ by $(\mathfrak{V},[\cdot,\cdot,\cdot]_\mathfrak{V}, d_\mathfrak{V})$ with the associated isomorphism $\zeta:(\hat{\mathfrak{L}}_1, [\cdot,\cdot,\cdot]_{\hat {\mathfrak{L}}_1}, $ $\hat {d_1})\rightarrow (\hat{\mathfrak{L}}_2, $
$[\cdot,\cdot,\cdot]_{\hat {\mathfrak{L}}_2}, \hat {d_2})$. Let $\sigma_1$ be a section of $(\hat{\mathfrak{L}}_1, [\cdot,\cdot,\cdot]_{\hat {\mathfrak{L}}_1}, \hat {d_1})$. As $p_2\circ\zeta=p_1$, we  get
$$p_2\circ(\zeta\circ \sigma_1)=p_1\circ \sigma_1= \mathrm{id}_{\mathfrak{L}}.$$
That is, $\zeta\circ \sigma_1$ is a section of $(\hat{\mathfrak{L}}_2, [\cdot,\cdot,\cdot]_{\hat {\mathfrak{L}}_2}, \hat {d_2})$. Denote $\sigma_2:=\zeta\circ \sigma_1$. Since $\zeta$ is a isomorphism of  modified $\lambda$-differential Lie triple systems such that $\zeta|_\mathfrak{V}=\mathrm{id}_\mathfrak{V}$, we have
\begin{align*}
\varsigma_2(a,b,c)&=[\sigma_2(a), \sigma_2(b), \sigma_{2}(c)]_{\hat{\mathfrak{L}}_2}-\sigma_2([a,b,c])\\
&=[\zeta(\sigma_1(a)), \zeta(\sigma_1(b)), \zeta(\sigma_1(c))]_{\hat{\mathfrak{L}}_2}-\zeta(\sigma_1([a, b, c]))\\
&=\zeta\big([\sigma_1(a), \sigma_1(b), \sigma_1(c)]_{\hat{\mathfrak{L}}_1}-\sigma_1([a, b, c])\big)\\
&=\zeta(\varsigma_1(a,b,c))\\
&=\varsigma_1(a,b,c)
\end{align*}
and
\begin{align*}
\varpi_2(a)&=\hat{d_2}(\sigma_2(a))-\sigma_2(d(a))=\hat{d_2}\big(\zeta(\sigma_1(a))\big)-\zeta\big(\sigma_1(d(a))\big)\\
&=\zeta\big(\hat{d_1}(\sigma_1(a))-\sigma_1(d(x))\big)\\
&=\zeta(\varpi_1(a))\\
&=\varpi_1(a).
\end{align*}
Hence, all equivalent abelian extensions give rise to the same element in $\mathcal{H}_{\mathrm{mDLts^{\lambda}}}^3(\mathfrak{L},\mathfrak{V})$.

Conversely, given two  cohomologous 3-cocycles $(\varsigma_1,\varpi_1)$ and $(\varsigma_2,\varpi_2)$  in $\mathcal{H}_{\mathrm{mDLts^{\lambda}}}^3(\mathfrak{L},\mathfrak{V})$, we can construct two abelian extensions $(\mathfrak{L}\oplus \mathfrak{V},[\cdot, \cdot,\cdot]_{\zeta_1},d_{\varpi_1})$ and  $(\mathfrak{L}\oplus \mathfrak{V},[\cdot, \cdot,\cdot]_{\zeta_2},d_{\varpi_2})$ via Eqs.~\eqref{5.1} and \eqref{5.2}. Then, there is  a linear map $\xi: \mathfrak{L}\rightarrow  \mathfrak{V}$ such that
 $$(\varsigma_2,\varpi_2)=(\varsigma_1,\varpi_1)+\partial(\xi).$$
 Define linear map $\zeta_\xi: \mathfrak{L}\oplus \mathfrak{V}\rightarrow  \mathfrak{L}\oplus \mathfrak{V}$ by
$\zeta_\xi(a,u):=a+\xi(a)+u, ~a\in \mathfrak{L}, u\in \mathfrak{V}.$
Then, $\zeta_\xi$ is an isomorphism of these two abelian extensions.
\end{proof}

\noindent
{{\bf Acknowledgments.}  The paper is  supported by the  Foundation of Science and Technology of Guizhou Province(No. [2018]1020),   The NSF of China (No. 12161013).

\end{document}